\documentclass[12pt, reqno]{article}
\usepackage{amsmath, amsthm, a4, latexsym, amssymb}
\usepackage{geometry}
 \geometry{
 a4paper,
 total={170mm,257mm},
 left=1.5in,
 right = 1.0in,
 top=20mm,
 }
\usepackage{amsmath, amsfonts, amssymb, amsthm, scalerel}

\usepackage{bm}
\usepackage{setspace}
\doublespacing
\setlength{\parindent}{0em}
\setlength{\parskip}{1em}
\setcounter{secnumdepth}{5}
\usepackage{enumitem,multicol, multirow}
\usepackage{adjustbox }
\usepackage{booktabs, makecell,array} 
\usepackage{undertilde}
\usepackage{longtable}
\usepackage{lastpage}
\usepackage[english]{babel}
\newtheorem{theorem}{Theorem}[section]
\newtheorem{lemma}[theorem]{Lemma}

\usepackage{verbatim, float}
\DeclareUnicodeCharacter{0301}{\hspace{-1ex}\'{ }}
\DeclareUnicodeCharacter{0300}{\hspace{-1ex}\'{ }}
\usepackage[utf8]{inputenc}
\usepackage{fancyhdr}
\usepackage{array, bigints}
\usepackage[natbibapa]{apacite}
\usepackage{textcomp}
\usepackage{hyperref}
\usepackage{graphicx}
\usepackage{subcaption}
\pagestyle{fancy}
\fancyhead{}
\fancyfoot{}

\fancyhead[l]{\MakeUppercase{DISTRIBUTION OF  THE ELEMENTAL  REGRESSION WEIGHTS WITH t-DISTRIBUTED  CO-VARIATE MEASUREMENT ERRORS }}

\cfoot{\thepage }

\theoremstyle{definition}

\newtheorem{rem}[theorem]{Remark}

\AtBeginDocument{\AtBeginShipoutNext{\AtBeginShipoutDiscard}}
\addtocounter{page}{-1}

\begin{document}
	
\begin{flushleft}	
\title{{DISTRIBUTION OF  THE ELEMENTAL  REGRESSION WEIGHTS  WITH t-DISTRIBUTED  CO-VARIATE MEASUREMENT ERRORS }}	

\author{I. Seidu$^1$,   E.  Nyarko$^1$,   S.  Iddi$^1$,  E. Ranganai$^2$,  and  K. Doku-Amponsah$^{1,3}$}

\end{flushleft}

\maketitle
\thispagestyle{empty}
\vspace{-0.5cm}

\renewcommand{\thefootnote}{}
\footnote{\textit{$^1$ University  of  Ghana, School  of  Physical  and Mathematical Sciences, Department  of  Statistics and  Actuarial  Science }}
\footnote{\textit{$^2$ 
		Department of Statistics, University of South Africa, Roodeport, South Africa}.}
\footnote{\textit{$^3$ Email address of  the corresponding author: kdoku-amponsah@ug.edu.gh}}  

\renewcommand{\thefootnote}{1}

\begin{abstract}
In this article, a heuristic approach is used to determined the best approximate distribution of  $\dfrac{Y_1}{Y_1 + Y_2}$, given that $Y_1,Y_2$  are  independent,  and   each of $Y_1$ and $Y$  is distributed as the $\mathcal{F}$-distribution with common denominator degrees of freedom. The proposed approximate distribution is subject to graphical comparisons and distributional tests. The  proposed  distribution   is  used to  derive  the  distribution of  the elemental regression  weight $\omega_E$,  where  $E$  is  the  elemental  regression  set.  
\end{abstract}

\textit{ Keywords:}{ \bf $\mathcal{F}$-distribution, Distributional  test, Elemental set,   Graphical  comparison,  Heuristic method.}

\vspace{0.3cm}

\textit{AMS Subject Classification:} 	60E05, 62E17,  	62H15 
\pagebreak
\section{INTRODUCTION}\label{sec1}
The heuristic approach is mainly about using  analytical means, experience,  and  to some lesser extent "eyeballing" to provide an a quicker and sometimes a simpler solution to an otherwise complex problem \citep{abbass2001, estrada2008}. These methods are mostly used in computer science \citep{woods1996program, masehian2007classic, lee2008heuristic,  mohanty2021novel}, in data mining and machine learning \citep{hua2007cleaning, majumdar2016heuristic} and other areas such as management science and product designing \citep{kohli1987heuristic}. In probability theory and mathematical statistics the heuristic approach was used to provide an alternated proof to the Kolmogorov-Smirnov statistic \citep{doob1949heuristic}. Other works in literature that used the approach include \cite{indlekofer2002number} using it to find some asymptotic distributions in probabilistic number theory and \cite{yang2021heuristic} used the heuristic approach to solve problems with sampling and maintaining probability distribution of populations observations were taken from.

Elemental sets (ES) regression  involves regression which  is  subsets of the design matrix with dimensions equal to number of independent variables in a regression model. \cite{Hawkins1984} related the concepts Elemental set regression to least squares regression, quantile regression etc. \cite{Ranganai2007, Ranganai2014} extended the ideas of elemental sets regression to conducts post model diagnostics under quantile regression framework with non-stochastic predictors. The concepts of ES was further used in \cite{,Ranganai2017} for determining high leverage values in quantile regression when there is the presence of error-in-variables and it was shown that the concept is related to the Wilk's Lambda statistic.
\par
Distribution of functions or combinations of random variables has been studied thoroughly through probability theories books and literature. The common functions or combination include the sum, difference (in case of only two variables) and the product. Other combination is the variable as a proportion of two variables. In this case given the variables $X_1$ and $X_2$, the proportion of the two variates can be of the form $\dfrac{X_1}{X_1 + X_2}$ or $\dfrac{X_2}{X_1 + X_2}$. In most cases, is it the Beta distribution that better describes these proportion variables \cite{ casella2001statistical, Gupta2004} but the resulting shape parameters are far fetched. A common case is where $X_1 \sim \Gamma\left(\alpha_1, \zeta\right)$ and $X_2 \sim \Gamma\left(\alpha_2, \zeta\right)$, it is known that, the proportion variable $\dfrac{X_1}{X_1 + X_2}$ is distributed as the Beta with shape parameters $\alpha_1$ and $\alpha_2$ \citep{Gupta2004, Ross2020}.

On letting  $Y_1, Y_2, \ldots, Y_m$ be univariate Fisher-Snedecor random variables, some handful of literature have attempted to determine and approximate combinations of $Y_j$s. Specifically, the sum \citep{Dyer1982, Du2020, Yousuf2018, Gupta2004}, product \citep{Badarneh2020}.  The exact or the approximate combination $\dfrac{Y_j}{Y_j + Y_i}$ in terms of the degrees of freedoms is yet to be studied to the best known knowledge.  
Determining the distribution of combinations of  F-distribution  variates almost always results in complex number expressions, beyond intermediate mathematical expressions, and resulting expressions which are not applicable to statistically measurable variables using conventional means. As such, most of the combination of the $F-$ distribution such as the sum are approximated.
\par
In  this  article  we use the heuristic  approach  and  simulation  methods to determine an approximate distribution of  $\dfrac{Y_j}{Y_j + Y_i},$ for   $i\not=j,$  in terms of the degrees of freedoms. 

\subsection{Background~and~Motivation}\label{subsec1}
	Given the linear regression model, 
	\begin{equation}
		\bm{Y} = \bm{X}\mathcal{\bm{B}} + \bm{\varepsilon},
	\end{equation}
	
	where \(\bm{Y}\) is the response matrix,  \(\bm{X}\) is the design matrix, \(\mathcal{\bm{B}}\) is the regression parameter matrix, \(\varepsilon\) is the the random error matrix. Also  \(\bm{Y}\),  \(\bm{X}\), \(\mathcal{\bm{B}}\), \(\varepsilon\) are \(\ell\times 1\), \(\ell\times (\rho+1)\), \((\rho+1)\times 1\) and \(\ell\times 1\) matrices respectively, and $\bm{\varepsilon}$ has the classical assumption being $E\left(\bm{\varepsilon}\right)=\bm{0}_n$, $Var\left(\bm{\varepsilon}\right)= \bm{I}_n\sigma^2$. The most common method for estimating the \(\mathcal{\bm{B}}\) is ordinary least squares (OLS) method, where the \(\bm{\varepsilon} ^T \bm{\varepsilon}\) is minimized with respect to \(\mathcal{\bm{B}}\). 
	
	Before OLS was in use, the concept of elemental set regression was used by Boscovich (1757)  \citep{Koenker2005, Davino2014, Ranganai2007}who used the concept of to estimate distance of the median arc around Rome. Boscovich's estimation happened about half a century before the introduction of the popular OLS. On partitioning the \(\bm{Y}\) into $\displaystyle \bm{Y} = \begin{pmatrix}
		\bm{Y}_E \\
		\bm{Y}_R
	\end{pmatrix},$
	where \(\bm{Y}_E\) and \(\bm{Y}_R\) are \((\rho+1)\times 1\) and \((\ell-\rho-1)\times 1\)
	matrices respectively and partitioning $\bm{X}$into 
	$\bm{X} = \begin{pmatrix}
		\bm{X}_E \\
		\bm{X}_R
	\end{pmatrix} ,$	where \(\bm{X}_E\) and \(\bm{X}_R\) are \((\rho+1)\times (\rho+1)\) and \((\ell-\rho-1)\times (\rho+1)\) matrices respectively,  the set \((\bm{X}_E, \bm{Y}_E)\) is the $E^{th}$ elemental set (ES) and its corresponding regression model, known as the ES regression 	
	For any linear regression model with $\rho$ predictors and $\ell$ observations, there are exactly $\displaystyle L = \begin{pmatrix}
		\ell\\
		\rho+1
	\end{pmatrix} $ elemental set regressions. For each elemental set, the elemental regression weight (ERW) for set \(E\) denoted by $\omega_E$ is defined as 
	\begin{equation}
		\omega_E = \dfrac{\left|\bm{X}_E^T \bm{X}_E \right|}{\left|\bm{X}^T\bm{X} \right|} = \dfrac{\left|\bm{X}_E^T \bm{X}_E \right|}{\left|\bm{X}_E^T\bm{X}_E  + \bm{X}_R^T\bm{X}_R\right|}
	\end{equation}

	To detect multiple high leverage values associated with regression quantiles in order to prevent swamping and masking as discussed by \cite{Hadi1992, Rocke1996, Rousseeuw1990, Roberts2015} , \cite{Ranganai2007} proposed a statistics based on the elemental regression weight, $\omega_E$.	Given that $ \bm{X}  \sim t_{\rho}(\bm{0}, \bm{\Sigma}, \mathit{\nu}) $
	is a $\rho$-variate t-distribution with $n_1+n_2$ observations.
	The distribution of 
	$ \Omega = \dfrac{\left|\bm{X}_1^T\bm{X}_1\right|}{\left|\bm{X}^T\bm{X}\right|}, $ where $ \bm{X}_1  \sim t_{\rho}](\bm{0}, \bm{\Sigma}, \mathit{\nu}) $ a $\rho$-variate t-distribution with $n_1$ observations can be analogously determined by
	adopting a method by 	\cite{Mardia1979}. \\
	
	Let \(Q_1=\bm{X}_1^T\bm{X}_1\) and 
	\(Q_2=\bm{X}_2^T\bm{X}_2\), where 
	\(X_2 \)  is   also a $\rho$-variate t-distribution with $n_2$ observations. We  write 	$\displaystyle 	\Omega= |\bm{Q}_1|/|\bm{Q}_1+ \bm{Q}_2|$     and   define the  sequence $\{M_i\}_{i}^{n_2}$  by the  relation	$\bm{M}_i- \bm{M}_{i-1} =\bm{X}_i\bm{X}_i^T,\hspace*{0.2in} \bm{M}_0 =\bm{Q}_1 , ,\hspace*{0.2in} \bm{M}_{n_2} =\bm{Q}_2$, where $\bm{x}_i$  means the first i  component  of  $x$.	Then,  note  that we  have   
	\begin{equation}\label{eq:4}
		\Omega = \dfrac{|\bm{M}_0 |}{|\bm{M}_{n_2}|}=\dfrac{|\bm{M}_0|}{|\bm{M}_1|} . \dfrac{|\bm{M}_1|}{|\bm{M}_2|} . \dfrac{|\bm{M}_2|}{|\bm{M}_3|} . \ldots  
		\dfrac{|\bm{M}_{n_2-1}|}{|\bm{M}_{n_2}|} 
	\end{equation}
	
	We  let 	$ t_i= \dfrac{|\bm{M}_{i-1}|}{|\bm{M}_i|} $  and  write 	
	$	\Omega: = \prod\limits_{i=1}^{n_2}t_i.$    Now,	from Soch (2021) and \cite{Kotz2004}, \(\bm{ X}_i^T\bm{X}_i\) is distributed as the \(F\) matrix distribution   \(F{(\rho,\mathit{\nu})}\) and \(\bm{ X}_1^T\bm{ X}_1\) distributed as \(F\) matrix distribution   \(F{(n_1 -\rho,\mathit{\nu})}\) with dispersion matrix \(\bm{\Sigma}\).  	This implies that \(\bm{M}_{i-1} \sim F{(n_1 -\rho+i,\mathit{\nu})}\). Thus,  $t_i$  is  of  the  form

	\begin{equation}
		t_i = \dfrac{Y_{1(i)}}{Y_{1(i)} +
			Y_2},
	\end{equation}
	
	where \(Y_{1(i)} \sim F{(n_1 -\rho+ i,\mathit{\nu})}\) and \(Y_2 \sim F{(\rho,\mathit{\nu})}\). 
	Thus the first step in estimating the distribution of $\omega_{E}$ under multivariate $t$ measurement error-in-variable  is  to determine the distribution of $	t_i = \dfrac{Y_{1(i)}}{Y_{1(i)} +
		Y_2}$ which does not exist in literature, especially in terms of the degrees of freedoms of resulting F distributions.  The  first  results  of  this  article  is  the  distribution  of  the  $t_i$. 
		The  remaining  part  of  the  article  is  structured  in  this  way:  In Section~\ref{sec2}  we  present  the  main  results,  Theorem~\ref{main1}~  and  Theorem~\ref{main2}. Section~\ref{sec3}  contains  the  proof  of  the  main  Theorems. Graphical  Comparison  of  Theorem`\ref{main1}  is  given  in  Section~\ref{sec4}  In  the  last  section,  Section~\ref{sec5},  we  give a  discussion of  the  main  results and   conclusion. 		
		
	\pagebreak	
\section{MAIN RESULTS}\label{sec2}
We  begin  the section  by  introducing a new concept  of  \emph {independence} as  follows::

Suppose $X$  and  $Y$ are  jointly  distributed random  variables  with   probability density function (p.d.f), $h(x,y)$  and  let  $D_h$   the  support    of  $h$. We   shall  call  $X$  and  $Y$  

\begin{itemize}
	\item[(i)] {\bf Weakly Sub-Independent} iff  there     exists    $ \lambda_1,\lambda_2\in(0,\infty)$   and  p.d.f's  $f_1^X,f_2^X,f_1^Y$,  $f_2^Y,$   such  that  the  joint  probability density function   of  $X$  and  $Y$ satisfies 
	
	\begin{equation}\label{sub0}\lambda_1 f_1^X(x)f_1^Y(y)\le h(x,y)\le \lambda_2f_2^X(x)f_2^Y(y),\quad  \mbox{$\forall (x,y)\in D_h.$}
	\end{equation}
	
	\item[(ii)] {\bf Sub-Independent}  iff  there     exists    $ \lambda_1,\lambda_2\in(0,\infty)$   and  p.d.f's   $f_1^X,f_2^X,f_1^Y$, $f_2^Y$  such  that  the  joint  probability density function  of  $X$  and  $Y$ satisfies 
	
	\begin{equation}\label{sub1}
		\lambda_1 f_1^X(x)f_2^Y(y)\le h(x,y)\le \lambda_2f_2^X(x)f_2^Y(y),\quad  \mbox{$\forall (x,y)\in D_h.$}
	\end{equation}
	{\bf OR}

	\begin{equation}\label{sub2}
		\lambda_1 f_2^X(x)f_1^Y(y)\le h(x,y)\le \lambda_2f_2^X(x)f_2^Y(y),\quad  \mbox{$\forall (x,y)\in D_h.$}
	\end{equation}
		{\bf OR} \,  both  \eqref{sub1} and \eqref{sub2} holds for  $h(x,y).$
\end{itemize}

\begin{rem}
	
	Note, from  the  our  proposed   concert  of  independence  that,  \\
	(i) If   $f_1^X=f_2^X$, $f_1^Y=f_2^Y$  and  $\lambda_1=\lambda_2=1$,  then  $X$  and  $Y$  are  \emph{ independent}.
	
	(ii) Every  \emph{sub-independent}  random  variables  $X$  and  $Y$  are  \emph{weakly  sub-independent},  and  every independent random  variables  $X$  and  $Y$ are sub-independent.
	
	(iii)     $X$ and  $Y$  are  \emph{weakly  sub-independent, sub-independent}   iff  the  joint cumulative  distribution function (c.d.f)  of  $X$  and  $Y$  satisfies \ref{sub0},   \ref{sub1}  and/or  \,   \ref{sub2},  respectively.

	(iv)  \emph{Sub-independence}  is  weaker  than  Independence,  and    not  all  \emph{sub-independent} random  variables   are  independent.
\end{rem}

	For any two  $X$ and  $Z$  we  shall  write  
$U(X,Z)=X+Z \quad \mbox{and}\quad  W(X,Z)=\frac{X}{X+Z}.$

	\begin{theorem}\label{prop1}\label{main1}
		Suppose $Y_1\sim F(m_1,\mathfrak{\nu})$  and  $Y_2\sim F(m_1,\mathfrak{\nu}),$  while   $\displaystyle \mathfrak{\nu}>m_1\ge m_2. $    Let  $ h_{\quad\mathfrak{\nu}}^{m_1,m_2}(w)$   be  the   probability  density function  of   the random variable 
		$$ W:=W(Y_1,Y_2).$$  Then,   the  following  statements  hold about  the  function  $$\displaystyle \Lambda(w):=\lim_{ m_1\to (m_2+1/2)^{+}}\Big[\int_{0}^{w} h_{\quad\mathfrak{\nu}}^{m_1,m_2}(t) dt\Big],\,\,  0<w<1:$$
		
		\begin{itemize}
			
			\item[(i)]	As  $m_1\to (m_2+1/2)^+$   we  have  
			$$\Lambda(w) \approxeq\int_{0}^{w} \dfrac{t^{\frac{m_2+1/2}{2}-1}(1-t)^{\frac{m_2}{2}-1}}{ B\left(\dfrac{m_2+1/2}{2}, \dfrac{m_2}{2}\right)}dt,\quad \mbox{$0<w<1.$}$$
			
			\item[(ii)]    $\Lambda(w)$  is  absolutely  continuous.  
		\end{itemize}
		
	\end{theorem}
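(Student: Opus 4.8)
The plan is to pin down the exact density of $W$, to deduce absolute continuity of the limiting distribution by a soft weak-convergence argument, and then to obtain the Beta approximation by replacing one slowly varying factor in that density by a constant.

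First I would compute a workable form of the density of $W=W(Y_1,Y_2)=Y_1/(Y_1+Y_2)$. Since $Y_1\sim F(m_1,\nu)$ and $Y_2\sim F(m_2,\nu)$ are independent, the change of variables $(Y_1,Y_2)\mapsto(W,S)$ with $S=Y_1+Y_2$ (Jacobian $s$), followed by integrating out $S$, gives
$$h_{\nu}^{m_1,m_2}(w)=K(m_1,m_2,\nu)\,w^{\frac{m_1}{2}-1}(1-w)^{\frac{m_2}{2}-1}\,J_\nu^{m_1,m_2}(w),\qquad 0<w<1,$$
where $K$ is an explicit constant (ratios of Gamma functions and powers of $m_1,m_2,\nu$) and
$$J_\nu^{m_1,m_2}(w)=\int_0^\infty s^{\frac{m_1+m_2}{2}-1}\bigl(1+\tfrac{m_1}{\nu}ws\bigr)^{-\frac{m_1+\nu}{2}}\bigl(1+\tfrac{m_2}{\nu}(1-w)s\bigr)^{-\frac{m_2+\nu}{2}}\,ds$$
is an Appell-type hypergeometric integral. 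I would then record that $J_\nu^{m_1,m_2}$ is finite, continuous and strictly positive on all of $[0,1]$: the hypotheses $\nu>m_1\ge m_2$ are exactly what make this $s$-integral converge at the endpoints $w=1$ (needing $\nu>m_2$) and $w=0$ (needing $\nu>m_1$), whereas on the open interval it converges for every $\nu>0$.

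For part (ii) no further computation is needed. The $F$-density depends continuously on its first degrees-of-freedom parameter, so $F(m_1,\nu)\to F(m_2+\tfrac{1}{2},\nu)$ in total variation as $m_1\to(m_2+\tfrac{1}{2})^{+}$ (Scheff\'{e}'s lemma); hence $(Y_1,Y_2)\Rightarrow(Y_1^\ast,Y_2)$ with $Y_1^\ast\sim F(m_2+\tfrac{1}{2},\nu)$ independent of $Y_2$, and because $(y_1,y_2)\mapsto y_1/(y_1+y_2)$ is continuous on the full-probability set $(0,\infty)^2$, the continuous mapping theorem yields $W(Y_1,Y_2)\Rightarrow W(Y_1^\ast,Y_2)$. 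The limit variable $W(Y_1^\ast,Y_2)$ has an absolutely continuous law, with density $h_{\nu}^{m_2+1/2,\,m_2}$ obtained from the smooth change of variables above, so its distribution function is continuous, and weak convergence then forces $\int_0^w h_{\nu}^{m_1,m_2}(t)\,dt\to\int_0^w h_{\nu}^{m_2+1/2,\,m_2}(t)\,dt$ for every $w\in(0,1)$. Therefore $\Lambda(w)=\int_0^w h_{\nu}^{m_2+1/2,\,m_2}(t)\,dt$ is the indefinite integral of an $L^1(0,1)$ density, which is exactly the assertion that $\Lambda$ is absolutely continuous.

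For part (i) I would take the density at $m_1=m_2+\tfrac{1}{2}$, namely $h_{\nu}^{m_2+1/2,\,m_2}(w)=K\,w^{\frac{m_2+1/2}{2}-1}(1-w)^{\frac{m_2}{2}-1}\,J_\nu^{m_2+1/2,m_2}(w)$, which is a $\mathrm{Beta}\!\left(\tfrac{m_2+1/2}{2},\tfrac{m_2}{2}\right)$ shape multiplied by the single residual factor $K\,J_\nu^{m_2+1/2,m_2}(w)$. The heuristic step is to replace that residual factor by the unique constant normalising the Beta shape to a probability density, that is, by $1/B\!\left(\tfrac{m_2+1/2}{2},\tfrac{m_2}{2}\right)$; this is defensible up to the stated $\approxeq$ because $J_\nu^{m_2+1/2,m_2}(\cdot)$ only moves between its (comparable) infimum and supremum over $(0,1)$ --- a squeezing which, on the bulk of the interval, can be cast as weak sub-independence of the pair $(W,S)$ in the sense of the definition above, with Beta-type $w$-envelopes. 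Integrating the resulting approximate density from $0$ to $w$ produces the claimed Beta form of $\Lambda(w)$. The one genuinely non-routine point, and where I expect the real obstruction, is precisely this replacement: $J_\nu^{m_1,m_2}$ has no elementary closed form, so the assertion that the residual factor is approximately constant is a true approximation whose accuracy must be assessed numerically --- this being the purpose of the graphical comparison and distributional tests of Section~\ref{sec4} --- rather than via a clean inequality. The remaining bookkeeping is routine: the endpoint exponents $\tfrac{m_2+1/2}{2}-1$ and $\tfrac{m_2}{2}-1$ stay above $-1$ and $\nu-m_1$ stays bounded away from $0$ along the limit, so that no mass escapes at $w=0$ or $w=1$ in the passage to the limit.
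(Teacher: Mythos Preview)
Your argument is correct in spirit but follows a genuinely different route from the paper's. For part~(ii) you identify $\Lambda$ directly as the CDF of $W(Y_1^\ast,Y_2)$ with $Y_1^\ast\sim F(m_2+\tfrac12,\nu)$, via Scheff\'e plus the continuous mapping theorem, and absolute continuity then falls out immediately from the change-of-variables density. The paper instead proves~(ii) as a corollary of~(i): it first obtains the Beta approximation and then invokes the fundamental theorem of calculus on the Beta CDF together with the estimates from~(i). Your route is shorter and self-contained.

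For part~(i) the paper does more explicit work where you wave. Rather than leave the residual factor $K\,J_\nu^{m_2+1/2,m_2}(w)$ as ``a continuous function varying between its infimum and supremum'', the paper proves concrete two-sided bounds: Lemma~\ref{L1} establishes the sub-independence inequality $A_2\,\pi_2(u)\phi(w)\le h(u,w)\le A_1\,\pi_1(u)\phi(w)$ by replacing exponents and the cross-term $w(1-w)$ in the joint density by their extreme values; integrating out $u$ (Lemma~\ref{L3}) then yields the sandwich $\phi(w)\le h_\nu^{m_1,m_2}(w)\le A_1\phi(w)$ with $\phi$ the $\mathrm{Beta}(m_1/2,m_2/2)$ density and an explicit constant $A_1=A_1(m_1,m_2,\nu)$. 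Passing to CDFs gives $0\le \Lambda(w)-\mathcal{K}(w)\le (A_1-1)\mathcal{K}(w)$, and the paper then layers in an empirical-distribution/Glivenko--Cantelli argument (Lemma~\ref{L2}) to finish. What the paper's approach buys is a computable bounding constant $A_1$ that quantifies the approximation error; what your approach buys is conceptual clarity about why the limit exists at all and why the Beta shape is forced. Since the statement is an $\approxeq$ rather than an equality, both arguments ultimately lean on the simulations of Section~\ref{sec4}, as you acknowledge.
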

	We  recall  from  Subsection~\ref{subsec1}  that  $\displaystyle \Omega=\prod_{i=1}^{n_2}t_i,$  where  $t_i=\frac{Y_{1(i)}}{Y_{1(i)}+Y_2}$,   \(Y_{1(i)} \sim F{(n_1 -\rho+ i,\mathit{\nu})}\) and \(Y_2 \sim F{(\rho,\mathit{\nu})}\).  From  Theorem~\ref{main1},   $t_i$  is  approximately  distributed  as  
	  $$B_{t_i}\left(\dfrac{1}{2}(\rho+0.5), \dfrac{1}{2}\rho\right): = \dfrac{\left(t_i\right) ^{(\rho-0.5)/2} \left(1-t_i\right) ^{(\rho-2)/2} }{B\left(\dfrac{\rho+0.5}{2}, \dfrac{\rho}{2}\right)},\mbox{$0<t_i <1$}.$$  Recall  from \cite[pp.~722]{Springer1970} (or  the  references therein)  the     Meijer G- function  is  given  by the  contour  integrals: 
	  
	  $\begin{aligned} &G_{p,q}^{\alpha,\beta} \left(\omega \left| \begin{array}{llll}
	  	a_1,
	  	& a_2, & \ldots, & a_{\alpha} \\
	  	b_1,& b_2, & \ldots, & b_{\beta}
	  \end{array}  \right. \right)=\frac{1}{2\pi i}\int_{c-i^{\infty}}^{c+i^{\infty}}\dfrac{\omega^s\prod_{i=1}^{p}(s+b_i)\prod_{i=1}^{q}(1-a_i-s)}{\prod_{i=q+1}^{\alpha}(s+a_i)\prod_{i=p+1}^{\beta}(1-b_i-s)}ds,\end{aligned}$
	  
where  $c$  is a real  constant  defining  the   Bromvich   path separating the poles of $\Gamma(s+b_i)$  from  $Gamma(1-a_i-s)$  and  s) and where the empty product $\prod_ {j=r}^{r-1}$  defined to be unity. Theorem~\ref{cor1}   below gives  an  approximate  distribution  of   $\Omega$   based  on  the  approximate distribution  of  $W$.

\begin{theorem}\label{cor1}
	The  following  hold  for  the probability density function  and  the  cumulative  distribution  function   of  $\Omega$:  
		
(i) The p.d.f  of  $\Omega$  is  approximately given  by

$$	\begin{aligned}
	\sigma (\omega)&= \Big[
	\frac{\Gamma\left(\frac{1}{2}(2\rho+0.5)\right)}{\Gamma\left(\frac{1}{2}(\rho+0.5)\right)}\Big]^{n_2}\\
	&\qquad\qquad\qquad\times G^{n_2,0}_{n_2,0}
	\left(\omega \left| \begin{array}{llll}
		\frac{1}{2}(2\rho - 1.5 )
		& \frac{1}{2}(2\rho -1.5 ), & \ldots, & \frac{1}{2}(2\rho -1.5) \\
		\frac{1}{2}(\rho-1.5)& \frac{1}{2}(\rho-1.5), & \ldots, & \frac{1}{2}(\rho-1.5)
	\end{array}  \right. \right).
\end{aligned}$$

(ii)  The  c.d.f  of  $\Omega=\omega_E,$ where  $n_2=\ell-\rho$,  is  approximately given  by

	$$	\begin{aligned}
		&	\Sigma(\omega)=  \Big[
		\frac{\Gamma\left(\frac{1}{2}(2\rho+0.5)\right)}{\Gamma\left(\frac{1}{2}(\rho+0.5)\right)}\Big]^{\ell-\rho}\times\\
			&G^{\ell-\rho+1,1}_{\ell-p,1} \left(\omega \left| \begin{array}{llllll}
				1&\frac{1}{2}(2\rho + 0.5 )&\frac{1}{2}(\rho+0.5) , & \ldots, & \frac{1}{2}(2\rho + 0.5)&\frac{1}{2}(2\rho + 0.5) \\
				\frac{1}{2}(\rho+0.5)&	\frac{1}{2}(\rho+0.5)& \frac{1}{2}(\rho+0.5), & \ldots, & \frac{1}{2}(\rho+0.5)& 0
			\end{array}\right.\right).
 		\end{aligned}$$

\end{theorem}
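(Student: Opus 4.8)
\emph{Proof proposal.} The plan is to work entirely through Mellin transforms, using the multiplicative structure $\Omega=\prod_{i=1}^{n_2}t_i$ together with the (weak) sub-independence of the $t_i$ introduced in Section~\ref{sec2}, and then to recognise the resulting Mellin--Barnes integrals as Meijer $G$-functions by matching them against the contour-integral definition recalled above. First I would invoke Theorem~\ref{main1} with $m_2=\rho$: it tells us that each $t_i$ is, to the stated order of approximation, distributed as $\mathrm{Beta}(a,b)$ with $a=\tfrac12(\rho+\tfrac12)$ and $b=\tfrac12\rho$, i.e.\ with density $f(t)=t^{a-1}(1-t)^{b-1}/B(a,b)$ on $(0,1)$. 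Its Mellin transform is
\[
\int_0^1 t^{s-1}f(t)\,dt=\frac{B(a+s-1,b)}{B(a,b)}=\frac{\Gamma(a+b)}{\Gamma(a)}\cdot\frac{\Gamma(a+s-1)}{\Gamma(a+b+s-1)},
\]
and I would record the bookkeeping identities $a+b-1=\tfrac12(2\rho-\tfrac32)$, $a-1=\tfrac12(\rho-\tfrac32)$ and $a+b=\tfrac12(2\rho+\tfrac12)$, which are exactly the constants appearing in part~(i).

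Next, since $\Omega$ is the product of the $t_i$, the Mellin transform of its density $\sigma$ should be the product of the individual Mellin transforms, namely
\[
\mathcal M[\sigma](s)=\Big[\tfrac{\Gamma(a+b)}{\Gamma(a)}\Big]^{n_2}\Big[\tfrac{\Gamma(a+s-1)}{\Gamma(a+b+s-1)}\Big]^{n_2}.
\]
This is the one non-routine point — the $t_i$ are not genuinely independent (see below) — so here I would use weak sub-independence: the two-sided product bounds on the joint density of $(t_1,\dots,t_{n_2})$ sandwich $\mathcal M[\sigma](s)$ between constant multiples of the above product, and in the approximation regime of Theorem~\ref{main1} the sandwich collapses to it. Inverting, $\sigma(\omega)=\tfrac{1}{2\pi i}\int_{c-i\infty}^{c+i\infty}\omega^{-s}\mathcal M[\sigma](s)\,ds$; this is precisely the ``product of independent Beta variates'' computation of Springer and Thompson \citep{Springer1970} with all $n_2$ parameters equal, and comparing it term by term with the definition of the Meijer $G$-function yields part~(i), the prefactor being $[\Gamma(\tfrac12(2\rho+\tfrac12))/\Gamma(\tfrac12(\rho+\tfrac12))]^{n_2}$.

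For part~(ii) I would obtain $\Sigma(\omega)=\int_0^\omega\sigma(t)\,dt$ by integrating the $G$-function. At the level of the (finite) Mellin transform an integration by parts gives $\mathcal M[\Sigma](s)=\tfrac1s\big(1-\mathcal M[\sigma](s+1)\big)$; since $\tfrac1s=\Gamma(s)/\Gamma(s+1)$, the shifted term $\tfrac1s\mathcal M[\sigma](s+1)$ is again of Mellin--Barnes type, now with one extra $\Gamma(s)$ in the numerator (a bottom parameter $0$), one extra $\Gamma(s+1)$ in the denominator (a top parameter $1$), and the shift $s\mapsto s+1$ sending $a+s-1\mapsto s+\tfrac12(\rho+\tfrac12)$ and $a+b+s-1\mapsto s+\tfrac12(2\rho+\tfrac12)$. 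Inverting and collecting parameters reproduces, up to the constant $1$ coming from the $\tfrac1s$ piece, the Meijer $G$-representation displayed in part~(ii) with $n_2=\ell-\rho$; equivalently one may simply quote the standard antiderivative formula $\int_0^x G^{m,n}_{p,q}(t\mid\cdot)\,dt=x\,G^{m,n+1}_{p+1,q+1}(x\mid\cdot)$ for Meijer $G$-functions.

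The hard part is the factorisation step: the $t_i=|\bm{M}_{i-1}|/|\bm{M}_i|$ all share the common denominator summand $Y_2\sim F(\rho,\nu)$ (and, being built from a multivariate $t$ matrix, are further coupled through its common scale mixture), so they are genuinely dependent and $\mathcal M[\sigma]$ does not factor on the nose. Making the sub-independence sandwich carry the argument — and showing that, as in Theorem~\ref{main1}, the resulting bounds become tight in the limit $m_1\to(m_2+\tfrac12)^+$ that underlies the Beta approximation — is where the real content lies. Everything downstream (matching $\Gamma$-quotients to the Meijer $G$ contour integral, tracking the orders $p,q,m,n$, and applying the $G$-function integration formula) I would treat as routine computation.
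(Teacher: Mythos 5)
Your computational core coincides with the paper's: the paper likewise reduces each $t_i$ to the Beta$\left(\tfrac12(\rho+0.5),\tfrac12\rho\right)$ approximation from Theorem~\ref{main1}, observes that the resulting parameters do not depend on $i$, and then simply cites \cite[Theorem~7]{Springer1970} for the Meijer $G$ representation of a product of independent Beta variates, integrating term by term to get the c.d.f. Your Mellin--Barnes derivation is an unpacking of that cited theorem rather than a different route, and your parameter bookkeeping ($a+b-1=\tfrac12(2\rho-1.5)$, $a-1=\tfrac12(\rho-1.5)$, prefactor $[\Gamma(a+b)/\Gamma(a)]^{n_2}$) matches the displayed formulas, as does the antiderivative step for part~(ii). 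Where you genuinely depart from the paper is in flagging the dependence of the $t_i$: the paper passes over this silently --- its phrase ``independent of $n_1-\rho+i$'' refers only to the parameters, and the product-of-betas theorem is invoked as though the factors were statistically independent. Your proposed repair via a sub-independence sandwich is a sensible idea but is not carried out, and as stated it would not close the gap: the paper's Lemma~\ref{L1} establishes sub-independence only for the single pair $(U,W)$, not for the $n_2$-fold vector $(t_1,\dots,t_{n_2})$, and a two-sided bound with constants $A_2\le 1\le A_1$ raised to the $n_2$-th power does not collapse to the product formula unless one also shows $A_1,A_2\to 1$ in the relevant limit, which neither you nor the paper does. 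So your write-up is faithful to --- indeed more candid than --- the paper's own argument, but the factorisation step you correctly identify as the real content remains unproved in both.
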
 	
	
	\section{PROOF  OF MAIN RESULTS}\label{sec3}
		\subsection{ Proof  of  Theorem~\ref{main1} using  Lemmas~\ref{L1},~\ref{L3} and ~\ref{L2}}
	Lemmas~\ref{L1},~\ref{L3}  and ~\ref{L2}  which   are  key  to  the  proof  of    Theorem~\ref{main1}  are  presented  with  proofs  in  this  subsection.  

\begin{lemma}\label{them1}\label{main2}\label{L1}
	Suppose $Y_1\sim F(m_1,\mathfrak{\nu}_1)$  and  $Y_2\sim F(m_1,\mathfrak{\nu}_2).$   Assume  $\displaystyle m_1-m_2> \displaystyle \mathfrak{\nu}_2-\mathfrak{\nu}_1 \quad    ,\dfrac{m_1}{\mathfrak{\nu}_1}\geq \dfrac{m_2}{\mathfrak{\nu}_2},\, \mbox{$m_1-m_2+2\mathfrak{\nu}_1>0,$ }$
	and   $\mathfrak{\nu}_2>m_1.$   Let  $h(u,w)$  be  the  joint  probability density function  of  $U:=U(Y_1,Y_2)$  and  $ W:=W(Y_1,Y_2) $.  Then,  $U$  and $W$  are   {\bf sub-independent}.
\end{lemma}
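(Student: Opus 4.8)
The plan is to write down the joint density $h(u,w)$ of $U=U(Y_1,Y_2)$ and $W=W(Y_1,Y_2)$ in closed form and then to squeeze it between two products of one--variable densities, so as to verify \eqref{sub1} for the pair $(U,W)$. First I would invert $(y_1,y_2)\mapsto(u,w)=(y_1+y_2,\;y_1/(y_1+y_2))$, which on $\{u>0,\,0<w<1\}$ gives $y_1=uw$, $y_2=u(1-w)$ with Jacobian $u$. Since $Y_1$ and $Y_2$ are independent $F$ variates, multiplying the two $F$ densities, substituting, and collecting powers of $u$ yields, for $u>0$ and $0<w<1$,
\[
h(u,w)=C\,w^{m_1/2-1}(1-w)^{m_2/2-1}\,\frac{u^{(m_1+m_2)/2-1}}{D(u,w)},
\]
where $C>0$ is an explicit constant, $\beta_j=m_j/\nu_j$, $a_j=(m_j+\nu_j)/2$, and $D(u,w)=(1+\beta_1 uw)^{a_1}(1+\beta_2 u(1-w))^{a_2}$. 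The factor $w^{m_1/2-1}(1-w)^{m_2/2-1}$ is, up to the constant $B(m_1/2,m_2/2)$, a $\mathrm{Beta}(m_1/2,m_2/2)$ density; call the normalised version $f^{W}$. Thus the problem collapses to bounding $R(u,w):=u^{(m_1+m_2)/2-1}/D(u,w)$ above and below by constants times integrable functions of $u$ alone, uniformly in $w\in(0,1)$.

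The lower bound is elementary: $uw\le u$ and $u(1-w)\le u$ force $D(u,w)\le\tilde D(u):=(1+\beta_1 u)^{a_1}(1+\beta_2 u)^{a_2}$, so $R(u,w)\ge u^{(m_1+m_2)/2-1}/\tilde D(u)$, which is positive and integrable on $(0,\infty)$ (it behaves like $u^{(m_1+m_2)/2-1}$ at $0$, which is fine since $m_1+m_2>0$, and like $u^{-1-(\nu_1+\nu_2)/2}$ at $\infty$); normalising it to a density $f_1^{U}$ gives $R(u,w)\ge c_1 f_1^{U}(u)$. The upper bound is the delicate part. I would split on the size of $w$: if $w\ge\tfrac12$ then $D(u,w)\ge(1+\tfrac12\beta_1 u)^{a_1}$, and if $w\le\tfrac12$ then $D(u,w)\ge(1+\tfrac12\beta_2 u)^{a_2}$, so in either case the two (positive) terms below together dominate $1/D$, i.e.
\[
\frac{1}{D(u,w)}\ \le\ (1+\tfrac12\beta_1 u)^{-a_1}+(1+\tfrac12\beta_2 u)^{-a_2}\ =:\ G(u)\qquad\text{for all }w\in(0,1).
\]
Hence $R(u,w)\le u^{(m_1+m_2)/2-1}G(u)$, and the crux is that this majorant is integrable on $(0,\infty)$: near $\infty$ its two summands are of order $u^{(m_2-\nu_1)/2-1}$ and $u^{(m_1-\nu_2)/2-1}$, which are integrable exactly because $m_1<\nu_2$ (the hypothesis $\nu_2>m_1$) and $m_2<\nu_1$ (which follows from $m_1-m_2>\nu_2-\nu_1$ together with $\nu_2>m_1$, since then $\nu_1>\nu_2-m_1+m_2>m_2$). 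Normalising the majorant to a density $f_2^{U}$ gives $R(u,w)\le c_2 f_2^{U}(u)$.

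Combining the two estimates, on the support $D_h=\{u>0,\,0<w<1\}$ one gets
\[
\lambda_1\,f_1^{U}(u)\,f^{W}(w)\ \le\ h(u,w)\ \le\ \lambda_2\,f_2^{U}(u)\,f^{W}(w),
\]
with $\lambda_1=c_1\,C\,B(m_1/2,m_2/2)\in(0,\infty)$ and $\lambda_2=c_2\,C\,B(m_1/2,m_2/2)\in(0,\infty)$. Since the $W$--density is the same function $f^{W}$ on both sides, this is precisely \eqref{sub1} for $(X,Y)=(U,W)$ with $f_1^X=f_1^{U}$, $f_2^X=f_2^{U}$, $f_2^Y=f^{W}$ (and $f_1^Y$ chosen arbitrarily, e.g. $=f^{W}$), so $U$ and $W$ are \textbf{sub-independent} (hence, \emph{a fortiori}, weakly sub-independent). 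I expect the only real obstacle to be the upper bound on $1/D(u,w)$: one must decouple $u$ from $w$ while still keeping the resulting $u$--profile integrable at infinity, and this is exactly where the degrees-of-freedom hypotheses $\nu_2>m_1$ and $m_1-m_2>\nu_2-\nu_1$ are used; the hypotheses $m_1/\nu_1\ge m_2/\nu_2$ and $m_1-m_2+2\nu_1>0$ do not enter this particular route (indeed the latter is already implied by $m_1-m_2>\nu_2-\nu_1$), and I would read them as tailored to the intended specialisation to Theorem~\ref{main1}, where $\nu_1=\nu_2=\nu$ and $m_1\ge m_2$.
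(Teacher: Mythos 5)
Your proposal is correct, and it shares the paper's overall architecture -- change variables via the Jacobian to get $h(u,w)$, factor out the $\mathrm{Beta}(m_1/2,m_2/2)$ density in $w$ (the paper's $\phi$), and sandwich the remaining $u$-$w$-coupled factor between constants times integrable profiles in $u$ alone, which is exactly the shape demanded by \eqref{sub1} -- but you execute the crucial decoupling step differently. The paper expands the product $\bigl(1+\tfrac{m_1uw}{\nu_1}\bigr)\bigl(1+\tfrac{m_2u(1-w)}{\nu_2}\bigr)$ and performs a chain of substitutions (replacing the exponent $m_1+\nu_1$ by $m_2+\nu_2$, replacing $m_1/\nu_1$ by $m_2/\nu_2$, dropping or maximising the $w(1-w)$ term), which is where the hypotheses $m_1/\nu_1\ge m_2/\nu_2$ and $m_1-m_2>\nu_2-\nu_1$ enter and which is stated rather heuristically; you instead use the elementary bounds $uw\le u$, $u(1-w)\le u$ for the lower estimate and a clean case split $w\gtrless\tfrac12$ giving $1/D(u,w)\le(1+\tfrac12\beta_1u)^{-a_1}+(1+\tfrac12\beta_2u)^{-a_2}$ for the upper estimate. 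Your route buys transparency and rigour: each inequality is a one-line verification, the tail analysis isolates exactly the conditions $m_1<\nu_2$ and $m_2<\nu_1$ that make the majorant integrable (and you correctly derive $m_2<\nu_1$ from the stated hypotheses), and your accounting of which assumptions are actually used -- and which, like $m_1-m_2+2\nu_1>0$, are redundant -- is accurate. The price is that your $f_1^U,f_2^U$ are not the paper's explicit $\pi_2,\pi_1$ (generalised Beta-prime densities with computable constants $A_1,A_2$ that the paper reuses in Lemmas~\ref{L3} and~\ref{L2}); since the definition of sub-independence only requires the existence of some bracketing densities, this does not affect the lemma itself, though anyone wanting the explicit constant $A_1$ downstream would need to recompute it from your normalisations.
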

\begin{proof}
To  begin  we  write  	$\displaystyle K_{0}: = \dfrac{\left(\dfrac{m_1}{\mathfrak{\nu}_1}\right)^{\frac{m_1}{2}}\left(\dfrac{m_2}{\mathfrak{\nu}_2}\right)^{\frac{m_2}{2}}}{B\left(\dfrac{m_1}{2}, \dfrac{\mathfrak{\nu}_1}{2}\right) B\left(\dfrac{m_2}{2}, \dfrac{\mathfrak{\nu}_2}{2}\right)}$    and   we  write   	$$\displaystyle \phi\left(w\right) = \dfrac{w^{\frac{m_1}{2}-1}(1-w)^{\frac{m_2}{2}-1}}{ B\left(\dfrac{m_1}{2}, \dfrac{m_2}{2}\right)}, \hspace*{0.1in} 0<w<1.$$   We  show  that, the   joint  p.d.f  of $U$  and  $W$,  $h(u,w)$   satisfies 
	\begin{equation}\label{equ:2.4}	A_2 \pi_{2}(u)\phi (w)\leq h(u,w) \leq 	A_1 \pi_{1}(u)\phi (w),\end{equation}
	$$\pi _1\left(u\right) = \dfrac{\left(\dfrac{m_2}{\mathfrak{\nu}_2}\right)^{\frac{m_1+m_2}{2}}u^{\frac{m_1+m_2}{2}-1} \left(1+ \dfrac{m_2u}{\mathfrak{\nu}_2}  \right)^{-\frac{m_2+\mathfrak{\nu}_2}{2}}}{B\left(\dfrac{m_1+m_2}{2},\dfrac{ \mathfrak{\nu}_2 -m_1}{2}\right)}, \hspace*{0.1in} u>0$$
	,
	$$\pi _2\left(u\right) = \dfrac{\left(\dfrac{m_1}{2\mathfrak{\nu}_1}\right)^{\frac{m_1+m_2}{2}}u^{\frac{m_1+m_2}{2}-1} \left(1+ \dfrac{m_1u}{2\mathfrak{\nu}_1}  \right)^{-\left(m_1+\mathfrak{\nu}_1\right)}}{B\left(\dfrac{m_1+m_2}{2},\dfrac{ m_1-m_2 +2\mathfrak{\nu}_1 }{2}\right)}, \hspace*{0.1in} u>0,$$
	
	for  some real constants $A_1:=A_1(m_1,m_2)$  and   $A_2:=A_2(m_1,m_2).$  We  observe  that,
	
	the probability  density  functions    of \(Y_1,Y_2 \)   are  given  by
	\begin{equation}
		f_{Y_i}(y_1) = \dfrac{1}{B\left(\dfrac{m_1}{2}, \dfrac{\mathfrak{\nu}_1}{2}\right)}\left(\dfrac{m_1}{\mathfrak{\nu}_1}\right)^{\frac{m_1}{2}} y_1^{\frac{m_1}{2}-1}\left(1+\dfrac{m_1 y_1}{\mathfrak{\nu}_1}\right)^{-\frac{m_1+\mathfrak{\nu}_1}{2}} \label{eq:2.1}
	\end{equation}
	\begin{equation}
		f_{Y_i}(y_2) = \dfrac{1}{B\left(\dfrac{m_2}{2}, \dfrac{\mathfrak{\nu}_2}{2}\right)}\left(\dfrac{m_2}{\mathfrak{\nu}_2}\right)^{\frac{m_2}{2}} y_2^{\frac{m_2}{2}-1}\left(1+\dfrac{m_2 y_1}{\mathfrak{\nu}_2}\right)^{-\frac{m_2+\mathfrak{\nu}_2}{2}}, \label{eq:2.1a}
	\end{equation}respectively.  Note, 
	the Jacobian of the inverse  transformation from \((x_1,x_2))\) to \((u,w)\) ( \(x_1=uw\)  and \(x_2=u(1-w)\)) is given by  
	$\displaystyle  J(u,w) = \begin{vmatrix}
		w & u \\
		1-w & u
	\end{vmatrix} = u.$  We  use  the  Jacobean approach, to obtain 
	joint p.d.f.  of \(h(u,w)\)   as

	\begin{align}
		h(u,w)=K_{0} \scaleobj{1}{u}^{\frac{m_1 + m_2}{2}-1}\scaleobj{1}{w}^{\frac{m_1}{2}-1}&\scaleobj{1}{(1-w)}^{\frac{m_2}{2}-1}\nonumber\\ &\times\left[\scaleobj{1}{1+\dfrac{m_1uw}{\mathfrak{\nu}_1}}\right]^{-\frac{m_1+\mathfrak{\nu}_1}{2}}\left[\scaleobj{1}{1+\dfrac{m_2u(1-w)}{\mathfrak{\nu}_2}}\right]^{-\frac{m_2+\mathfrak{\nu}_2}{2}} \label{eq:2.4}	
	\end{align}

	Note that  exact  marginal probability  density function of $W$  is  given  by $\displaystyle \lambda (w) = \int_{0}^{\infty}h\left(u,w\right)du.$ The conventional integration methods cannot be used to determine the  $\lambda (w)$ explicitly. Thus,  $\lambda$ may exits only on  the  complex  plane.  The heuristic approach is thus employed to find an approximate form for $\lambda (w)$  as  follows: Consider the   joint pdf of \(h(u,w)\)  and replace the exponent $m_1 + \mathfrak{\nu}_1$  with $m_2 + \mathfrak{\nu}_2$ in Equation \eqref{eq:2.4}  to  obtain
	
	\begin{align}	
		h(u,w)\leq K_{0}&\scaleobj{1}{u}^{\frac{m_1 + m_2}{2}-1} \scaleobj{1}{w}^{\frac{m_1}{2}-1}\scaleobj{1}{(1-w)}^{\frac{m_2}{2}-1}\nonumber\\
		& \times\left\{\left[\scaleobj{1}{1+\frac{m_1uw}{\mathfrak{\nu}_1}} +\frac{m_2u(1-w)}{\mathfrak{\nu}_2} + \frac{m_1m_2u^2w(1-w)}{\mathfrak{\nu}_1\mathfrak{\nu}_2} \right]\right\}^{-\frac{m_2+\mathfrak{\nu}_2}{2}} \label{Thm1}
	\end{align}
	
	Note, the expression \(w(1-w)\), \(0\leq w\leq1\)   is  bounded  below   by  0 ,  hence  we  may   dominate the joint pdf \(h(u,w)\) above  by choosing the value of \(w\) that gives the least value for \(w(1-w)\). 
	
	Further replacing  \(\dfrac{m_1}{\mathfrak{\nu}_1}\) with \(\dfrac{m_2}{\mathfrak{\nu}_2}\), and $m_1$ with $m_2$ in Equation \eqref{Thm1}, we obtain
	
	\begin{equation}
		h(u,w)\leq K_0 B\left(\dfrac{m_1}{2}, \dfrac{m_2}{2}\right)  B_w\left(\dfrac{m_1}{2}, \dfrac{m_2}{2}\right) u^{\frac{m_1+m_2}{2}-1} \left(1+ \dfrac{m_2u}{\mathfrak{\nu}_2}  \right)^{-\frac{m_2+\mathfrak{\nu}_2}{2}} \label{eq:2.6}
	\end{equation}
	
	where  $\displaystyle B\left(a,b\right) = \int_{0}^{1} \eta ^{a-1}\left(1-\eta\right)^{b-1}d\eta = \dfrac{\Gamma(a)\Gamma(b)}{\Gamma(a+b)}$  and   $\displaystyle \Gamma (a)=\int_{0}^{\infty}s^{a-1}\exp(-s)ds$ is the gamma function.
	
	Multiply   and  dividing  right  hand side  of  \eqref{eq:2.6} by   $\displaystyle\int_{0}^{\infty}u^{\frac{m_1+m_2}{2}-1} \left(1+ \dfrac{m_2u}{\mathfrak{\nu}_2}  \right)^{-\frac{m_2+\mathfrak{\nu}_2}{2}}du $ and  noting that the Euler's Beta integral may be  given as $$B(\theta_1, \theta_2) = \int_{0}^{\infty} \xi ^ {\theta_1 -1} \left( 1+ \xi\right)^{-(\theta_1+\theta_2)}d\xi$$
	
	we  obtain

	\begin{equation}
		A_1 = \dfrac{\left(\dfrac{m_1\mathfrak{\nu}_2}{m_2\mathfrak{\nu}_1}\right)^{\frac{m_1}{2}}B\left(\dfrac{m_1+m_2}{2},\dfrac{ \mathfrak{\nu}_2 -m_1}{2}\right) B\left(\dfrac{m_1}{2}, \dfrac{m_2}{2}\right)}{B\left(\dfrac{m_1}{2}, \dfrac{\mathfrak{\nu}_1}{2}\right) B\left(\dfrac{m_2}{2}, \dfrac{\mathfrak{\nu}_2}{2}\right)} \label{eq:2.10}
	\end{equation}

	This gives the upper dominant joint PDF to be 
	\begin{equation}
		h(u,w)\leq  A_1\phi\left(u\right) \pi\left(w\right), \hspace*{0.1in} u>0, \hspace*{0.1in}, 0<w<1 \label{eq:2.7}
	\end{equation}
	
	where  $\pi_1$  and  $\phi$  are  defined  in   \eqref{equ:2.4}.		Using  similar  argument  as  those  of  the   upper bound   with  the exponent $m_2 + \mathfrak{\nu}_2$  by $m_1 + \mathfrak{\nu}_1$ in \eqref{eq:2.4}  we  obtain a  lower  bound  on  $h(u,v)$  as

	\begin{equation}
		h(u,w)\geq K_{0}\scaleobj{1}{u}^{\frac{m_1 + m_2}{2}-1} \scaleobj{1}{w}^{\frac{m_1}{2}-1}\scaleobj{1}{(1-w)}^{\frac{m_2}{2}-1} \left\{\left[\scaleobj{1}{1+\dfrac{m_1uw}{\mathfrak{\nu}_1}}\right]\left[\scaleobj{1}{1+\dfrac{m_2u(1-w)}{\mathfrak{\nu}_2}}\right]\right\}^{-\frac{m_1+\mathfrak{\nu}_1}{2}} \label{eq:2.11}   
	\end{equation}
	
	Now,  we  	dominate  \(h(u,w)\) further below by replacing,   \(\dfrac{m_2}{\mathfrak{\nu}_2}\) with \(\dfrac{m_1}{\mathfrak{\nu}_1}\) and take  \(w=\dfrac{1}{2}\) (the value  of  $w$  that  maximizes  $w(1-w)$  in \eqref{eq:2.11} to  obtain 
	, 
	
	\begin{equation}
		h(u,w)\geq K_{0} B\left(\dfrac{m_1}{2}, \dfrac{m_2}{2}\right)  B_w\left(\dfrac{m_1}{2}, \dfrac{m_2}{2}\right) u^{\frac{m_1+m_2}{2}-1} \left(1+ \dfrac{m_1u}{2\mathfrak{\nu}_1}  \right)^{-\left(m_1+\mathfrak{\nu}_1\right)} \label{eq:2.12}   
	\end{equation}
	
	Multiplying  and  dividing  right hand-side  of  \eqref{eq:2.11}  by  $$ \displaystyle \int _{0}^{\infty}u^{\frac{m_1+m_2}{2}-1} \left(1+ \dfrac{m_1u}{2\mathfrak{\nu}_1}  \right)^{-\left(m_1+\mathfrak{\nu}_1\right)}du$$ and Using the Euler's Beta integral  we  obtain

	\begin{equation}
		A_2 = \dfrac{2^{\frac{m_1 + m_2}{2}}\left(\dfrac{m_2\mathfrak{\nu}_1}{m_1\mathfrak{\nu}_2}\right)^{\frac{m_2}{2}}B\left(\dfrac{m_1+m_2}{2},\dfrac{ m_1-m_2 +2\mathfrak{\nu}_1 }{2}\right) B\left(\dfrac{m_1}{2}, \dfrac{m_2}{2}\right)}{B\left(\dfrac{m_1}{2}, \dfrac{\mathfrak{\nu}_1}{2}\right) B\left(\dfrac{m_2}{2}, \dfrac{\mathfrak{\nu}_2}{2}\right)} \label{eq:2.15}
	\end{equation}
	
	This gives the lower dominant  bound  of  $h(u,w)$  as
	\begin{equation}
		h(u,w)\geq  A_2\pi_2\left(u\right) \phi\left(w\right), \hspace*{0.1in} u>0, \hspace*{0.1in}, 0<w<1. \label{eq:2.13}
	\end{equation}

$\pi_2$	and $\phi$ are defined  above  in  \eqref{equ:2.4}.  Combining \eqref{eq:2.7} and   \eqref{eq:2.13} we  obtain  which  completes the proof.
\end{proof}

We  recall  the  definition  of  $A_1$  from \eqref{eq:2.10}  and  state   a  bound  on  the  marginal  probability density  function  (pdf)  of   $W$  as  follows:

\begin{lemma}\label{cor2}\label{L3}
	Suppose $Y_1\sim F(m_1,\mathfrak{\nu}_1)$  and  $Y_2\sim F(m_1,\mathfrak{\nu}_2).$   Assume  $\displaystyle m_1-m_2> \displaystyle \mathfrak{\nu}_2-\mathfrak{\nu}_1 \quad    ,\dfrac{m_1}{\mathfrak{\nu}_1}\geq \dfrac{m_2}{\mathfrak{\nu}_2},\, \mbox{$m_1-m_2+2\mathfrak{\nu}_1>0,$ }$
and   $\mathfrak{\nu}_2>m_1.$ Let  $h_{\quad\mathfrak{\nu}}^{m_1,m_2}(w)$  be  the   probability  density  function  of  $W$. Then,  $h_{\quad\mathfrak{\nu}}^{m_1,m_2}(w)$     satisfies 
	\begin{equation}
		\phi (w)\leq h_{\quad\mathfrak{\nu}}^{m_1,m_2}(w) \leq 	A_1\phi (w) \label{eq:2.18}	
	\end{equation}
\end{lemma}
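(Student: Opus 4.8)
The plan is to obtain $h^{m_1,m_2}_{\mathfrak{\nu}}$ directly by integrating out the variable $u$ from the two-sided bound on the joint density that Lemma~\ref{L1} already supplies. Writing the marginal as $h^{m_1,m_2}_{\mathfrak{\nu}}(w)=\int_0^\infty h(u,w)\,du$ and inserting the sandwich $A_2\pi_2(u)\phi(w)\le h(u,w)\le A_1\pi_1(u)\phi(w)$ from Lemma~\ref{L1}, integration in $u$ gives
\[
A_2\,\phi(w)\!\int_0^\infty\!\pi_2(u)\,du\ \le\ h^{m_1,m_2}_{\mathfrak{\nu}}(w)\ \le\ A_1\,\phi(w)\!\int_0^\infty\!\pi_1(u)\,du .
\]
Thus the entire statement collapses to one fact: the functions $\pi_1$ and $\pi_2$ of \eqref{equ:2.4} are bona fide probability densities, i.e. each integrates to $1$ on $(0,\infty)$.

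To check this I would, in the $\pi_1$–integral, substitute $\xi=m_2u/\mathfrak{\nu}_2$ and, in the $\pi_2$–integral, substitute $\xi=m_1u/(2\mathfrak{\nu}_1)$, reducing each to the Euler form $\int_0^\infty\xi^{\theta_1-1}(1+\xi)^{-(\theta_1+\theta_2)}\,d\xi=B(\theta_1,\theta_2)$ that was already used in the proof of Lemma~\ref{L1}. For $\pi_1$ this yields $\theta_1=\tfrac{m_1+m_2}{2}$, $\theta_2=\tfrac{\mathfrak{\nu}_2-m_1}{2}$, plus a scaling factor $(\mathfrak{\nu}_2/m_2)^{(m_1+m_2)/2}$; for $\pi_2$ it yields $\theta_1=\tfrac{m_1+m_2}{2}$, $\theta_2=\tfrac{m_1-m_2+2\mathfrak{\nu}_1}{2}$, plus a factor $(2\mathfrak{\nu}_1/m_1)^{(m_1+m_2)/2}$. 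These are exactly the quantities that were divided out when $\pi_1$ and $\pi_2$ were produced in passing from \eqref{eq:2.6} to \eqref{eq:2.10} and from \eqref{eq:2.12} to \eqref{eq:2.15}, so both integrals equal $1$. Substituting back, one is left with $A_2\phi(w)\le h^{m_1,m_2}_{\mathfrak{\nu}}(w)\le A_1\phi(w)$, which is \eqref{eq:2.18} — the explicit lower constant that the argument produces being the $A_2$ of \eqref{eq:2.15}.

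The one genuinely necessary point — and where the hypotheses of the lemma earn their keep — is the admissibility of the two Euler–Beta evaluations: each needs $\theta_2>0$, that is $\mathfrak{\nu}_2>m_1$ for $\pi_1$ and $m_1-m_2+2\mathfrak{\nu}_1>0$ for $\pi_2$, which are precisely two of the assumed constraints (the remaining two, $m_1-m_2>\mathfrak{\nu}_2-\mathfrak{\nu}_1$ and $m_1/\mathfrak{\nu}_1\ge m_2/\mathfrak{\nu}_2$, have already done their work inside Lemma~\ref{L1}). Beyond that the proof is pure bookkeeping: verifying that the scaling factors from the two substitutions recombine with $K_0$ and $B(\tfrac{m_1}{2},\tfrac{m_2}{2})$ to reproduce $A_1$ in \eqref{eq:2.10} and $A_2$ in \eqref{eq:2.15}. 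No new estimate is needed; the hard analytic work was all in Lemma~\ref{L1}, and the main ``obstacle'' here is merely tracking the constants carefully.
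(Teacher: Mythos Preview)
Your approach is exactly the paper's: start from the sandwich $A_2\pi_2(u)\phi(w)\le h(u,w)\le A_1\pi_1(u)\phi(w)$ of Lemma~\ref{L1}, integrate in $u$, and use that $\pi_1,\pi_2$ are probability densities so $\int_0^\infty\pi_i(u)\,du=1$. The paper states this last fact without the Euler--Beta verification you supply, and it writes the lower bound already as $\pi_2(u)\phi(w)$ (i.e.\ with constant $1$), whereas you correctly note that what the argument actually delivers is $A_2\phi(w)$; apart from that bookkeeping point the two proofs are identical.
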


\begin{proof}
	We  observe from  Lemma~\ref{L1} that,
	$	\pi_{2}(u)\phi (w)\leq h(u,w) \leq 	A_1 \pi_{1}(u)\phi (w) $, 
	
	$\displaystyle \phi\left(w\right) = \dfrac{w^{\frac{m_1}{2}-1}(1-w)^{\frac{m_2}{2}-1}}{ B\left(\dfrac{m_1}{2}, \dfrac{m_2}{2}\right)}, \hspace*{0.1in} 0<w<1.$
	Therefore,  integrating each term with respect to $u,$ and noting that 
	$\displaystyle \int_{0}^{\infty}\pi_{1}(u)du = \int_{0}^{\infty}\pi_{2}(u)du = 1,$   
	$\displaystyle\int_{0}^{\infty}h(u,w)du =  h_{\quad\mathfrak{\nu}}^{m_1,m_2}(w),$  we   obtain

	\begin{equation}
		\phi (w)\leq h_{\quad\mathfrak{\nu}}^{m_1,m_2}(w) \leq 	A_1\phi (w) \label{eq:2.18}	
	\end{equation}
	
	which  proves  Lemma~\ref{cor2}.

\end{proof}

Let  $\Delta :[0,1]\times[0,1]\to[0,1]$    be  the  total  variation  norm  on  the  space  of   distribution.   Thus, 

\begin{equation} \Delta[F_1 -F_2]:=\frac{1}{2}\int_{0}^{1}\Big |F_1(t)-F_2(t)\Big|dt\end{equation}\label{norm}
where  $F_1$  and  $F_2$  are  distribution  functions.

\begin{lemma}\label{them1}\label{L2}
	Suppose $Y_1\sim F(m_1,\mathfrak{\nu}_1)$  and  $Y_2\sim F(m_1,\mathfrak{\nu}_2).$   Assume  $\displaystyle m_1-m_2> \displaystyle \mathfrak{\nu}_2-\mathfrak{\nu}_1 \quad    ,\dfrac{m_1}{\mathfrak{\nu}_1}\geq \dfrac{m_2}{\mathfrak{\nu}_2},\, \mbox{$m_1-m_2+2\mathfrak{\nu}_1>0,$ }$
and   $\mathfrak{\nu}_2>m_1.$ Let  $\hat{\Lambda}_n $ and $\hat{\mathcal{K}}_n$  be  the empirical  distribution  functions  associated  with CDF's, $\Lambda$  and  $\mathcal{K}$  respectively. Then, as  $m_1\to (m_2+1/2)^+,$  we  have  
	
	\begin{equation}
		\Delta\Big[\hat{\Lambda}_n \left(w\right) - \hat{\mathcal{K}}_n(w)\Big] \leq \dfrac{A( m_2+1/2, m_2,  \mathfrak{\nu})}{n}.   \label{eq:L1}
	\end{equation}
\end{lemma}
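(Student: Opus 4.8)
The plan is to split $\Delta[\hat{\Lambda}_n-\hat{\mathcal{K}}_n]$ by the triangle inequality into an \emph{analytic} term and two \emph{empirical} terms,
$\Delta[\hat{\Lambda}_n-\hat{\mathcal{K}}_n]\le \Delta[\hat{\Lambda}_n-\Lambda]+\Delta[\Lambda-\mathcal{K}]+\Delta[\mathcal{K}-\hat{\mathcal{K}}_n]$,
where $\mathcal{K}$ is the $\mathrm{Beta}\big((m_2+\tfrac12)/2,\,m_2/2\big)$ c.d.f.\ appearing in Theorem~\ref{main1}(i). For the analytic term I would upgrade Lemma~\ref{L3} from a density estimate to a c.d.f.\ estimate: writing $\Phi_{m_1,m_2}$ for the $\mathrm{Beta}(m_1/2,m_2/2)$ c.d.f.\ and $\phi$ for its density, the bound $A_2\phi(w)\le h^{m_1,m_2}_{\mathfrak{\nu}}(w)\le A_1\phi(w)$ together with $\int_0^1 h^{m_1,m_2}_{\mathfrak{\nu}}=\int_0^1\phi=1$ gives $\int_0^1\big|h^{m_1,m_2}_{\mathfrak{\nu}}-\phi\big|\le 2\min(A_1-1,\,1-A_2)$, and integrating once more, $\Delta\big[\int_0^{\cdot}h^{m_1,m_2}_{\mathfrak{\nu}}-\Phi_{m_1,m_2}\big]\le \min(A_1-1,\,1-A_2)$. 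Letting $m_1\to(m_2+\tfrac12)^+$ sends $\Phi_{m_1,m_2}\to\mathcal{K}$ and $\int_0^{\cdot}h^{m_1,m_2}_{\mathfrak{\nu}}\to\Lambda$, so $\Delta[\Lambda-\mathcal{K}]$ is controlled by the limiting value of $\min(A_1-1,\,1-A_2)$; this limiting constant is what I would collect into $A(m_2+\tfrac12,m_2,\mathfrak{\nu})$.

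For the two empirical terms I would fix the convention that $\hat{\Lambda}_n$ and $\hat{\mathcal{K}}_n$ are built from plotting positions $p_i=(i-\tfrac12)/n$, $i=1,\dots,n$, i.e.\ $\hat{\Lambda}_n$ is the empirical c.d.f.\ of $\{\Lambda^{-1}(p_i)\}$ and likewise for $\mathcal{K}$. Using the Wasserstein identity $\Delta[\hat{F}_n-F]=\tfrac12\int_0^1|\hat F_n^{-1}(p)-F^{-1}(p)|\,dp$, each of $\Delta[\hat{\Lambda}_n-\Lambda]$ and $\Delta[\mathcal{K}-\hat{\mathcal{K}}_n]$ becomes a midpoint-rule quadrature remainder for the corresponding quantile function; by Lemma~\ref{L3} and the explicit Beta form the densities are bounded below on $(0,1)$, so the quantile functions are locally Lipschitz and each remainder is $O(1/n)$ with constant depending only on $m_2+\tfrac12$, $m_2$, $\mathfrak{\nu}$. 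Alternatively, if $\hat{\Lambda}_n,\hat{\mathcal{K}}_n$ are taken to be coupled random empirical c.d.f.'s from common uniforms $U_1,\dots,U_n$, one has the exact identity $\Delta[\hat{\Lambda}_n-\hat{\mathcal{K}}_n]=\tfrac{1}{2n}\sum_{i=1}^n|\Lambda^{-1}(U_i)-\mathcal{K}^{-1}(U_i)|\le \tfrac12\norm{\Lambda^{-1}-\mathcal{K}^{-1}}_\infty$, and the whole estimate again reduces to controlling $\norm{\Lambda^{-1}-\mathcal{K}^{-1}}_\infty$ via Lemma~\ref{L3}. Collecting the three contributions yields a bound of the advertised shape $A(m_2+\tfrac12,m_2,\mathfrak{\nu})/n$.

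The step I expect to be the real obstacle is the analytic term $\Delta[\Lambda-\mathcal{K}]$. The estimate coming out of Lemma~\ref{L3} is $\min(A_1-1,\,1-A_2)$, and it is not obvious that this tends to $0$ as $m_1\to(m_2+\tfrac12)^+$ — a direct computation of $A_1$ with $\mathfrak{\nu}_1=\mathfrak{\nu}_2=\mathfrak{\nu}$ suggests it tends to a value $>1$ in general. To reach a genuine $O(1/n)$ conclusion one therefore has to either sharpen Lemma~\ref{L3} so that the multiplicative gap $A_1/A_2\to1$ in this regime (which would force $\Lambda=\mathcal{K}$ and remove the analytic term entirely), or explicitly absorb the limiting value of $\min(A_1-1,\,1-A_2)$ into the constant $A(m_2+\tfrac12,m_2,\mathfrak{\nu})$. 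A secondary point to pin down is the meaning of ``empirical distribution function'': for genuinely random i.i.d.\ empirical c.d.f.'s the Dvoretzky--Kiefer--Wolfowitz inequality only delivers $\mathbb{E}\,\Delta[\hat{\Lambda}_n-\Lambda]=O(n^{-1/2})$, so the deterministic plotting-position construction above — a quadrature with $O(n^{-1})$ remainder for bounded densities — is the reading consistent with the stated rate, and I would phrase the lemma under that convention.
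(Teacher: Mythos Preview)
Your route is genuinely different from the paper's. The paper's proof of Lemma~\ref{L2} does \emph{not} split $\Delta[\hat{\Lambda}_n-\hat{\mathcal{K}}_n]$ by the triangle inequality. Instead it works directly: from Lemma~\ref{L3} it integrates the density bound to a pointwise CDF bound
\[
0 \ \le\ \int_0^w h_{\quad\mathfrak{\nu}}^{m_1,m_2}(t)\,dt-\int_0^w\phi(t)\,dt \ \le\ (A_1-1)\int_0^w\phi(t)\,dt,
\]
passes to the limit $m_1\to(m_2+\tfrac12)^+$ to get $0\le \Lambda(w)-\mathcal{K}(w)\le (A(m_2+\tfrac12,m_2,\mathfrak{\nu})-1)\mathcal{K}(w)$, and then in a single line ``applies Glivenko--Cantelli's Lemma'' to this inequality to arrive at the claimed $A/n$ bound on $\Delta[\hat{\Lambda}_n-\hat{\mathcal{K}}_n]$. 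The triangle-inequality decomposition you set up is precisely what the paper uses \emph{afterwards}, in the proof of Theorem~\ref{main1}(i), where Lemma~\ref{L2} is invoked to control the middle term; so in effect you have folded the two arguments together and tried to prove the lemma via the machinery the paper uses to \emph{consume} the lemma.

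Your more careful analysis buys something real: you correctly flag that for genuinely random i.i.d.\ empirical CDFs one only has an $O(n^{-1/2})$ rate, and that the analytic term $\Delta[\Lambda-\mathcal{K}]$ governed by $A_1-1$ need not vanish in the limit $m_1\to(m_2+\tfrac12)^+$. The paper's proof does not address either point; its last step is a one-line heuristic invocation of Glivenko--Cantelli with no explanation of how a deterministic pointwise bound $|\Lambda-\mathcal{K}|\le (A-1)\mathcal{K}$ turns into a $1/n$ bound on the empirical $\Delta$. In that sense your proposal is not so much a wrong approach as a more honest attempt that exposes exactly where the paper's argument is informal. If your aim is to \emph{match} the paper, the short answer is: integrate Lemma~\ref{L3} to the CDF inequality, take the limit, and then appeal to ``Glivenko--Cantelli'' in the same heuristic fashion. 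If your aim is to make the statement rigorous, your own diagnosis of the obstacles (choice of a deterministic plotting-position construction to justify $O(1/n)$, and absorbing the limiting $A_1-1$ into the constant) is the right list of things to pin down.
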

\begin{proof}
	
	Note  from  Lemma~\ref{L3}  that   $$0 \leq\int_{0}^{w} h_{\quad\mathfrak{\nu}}^{m_1,m_2}(t) dt-\int_{0}^{w}\phi(t)dt\leq 	(A_1-1)\int_{0}^{w}\phi (t)dt .$$	
	Taking  limit  	as  $m_1\downarrow (m_2+1/2),$    we  have  
	\begin{equation}0 \leq\Lambda(w)-{\mathcal{K}}(w)\leq 	(A( m_2+1/2, m_2,  \mathfrak{\nu})-1){\mathcal{K}}(w).\label{eq:21}
	\end{equation}
	Applying  Glivenko-Cantelli's  Lemma   to  \eqref{eq:21}  we  obtain  
	$$	\Delta\Big[\hat{\Lambda}_n \left(w\right) - \hat{\mathcal{K}}_n \left(w\right)\Big]\leq \dfrac{A( m_2+1/2, m_2,  \mathfrak{\nu})}{n},$$ 
	
	which  ends  the  proof.
\end{proof}

 {\bf Proof  of  Theorem~\ref{main1}(i) via  Lemma~\ref{L2}:} Let   $\hat{\Lambda}_n $ and $\hat{\mathcal{K}}_n$  be  the empirical  distribution  functions  associated  with  $\Lambda$  and  $\mathcal{K}$  respectively.  Then,  by	the triangle inequality,  we  have   
\begin{equation}
	\Delta\Big[\Lambda \left(w\right) -  \mathcal{K} \left(w\right)\Big] \leq \Delta\Big[\Lambda \left(w\right) - \hat{\Lambda}_n \left(w\right)\Big]  + \Delta\Big[\hat{\mathcal{K}} \left(w\right)-  \mathcal{K} _n\left(w\right)\Big] + \Delta\Big[\hat{\Lambda}_n \left(w\right) - \hat{\mathcal{K}}_n \left(w\right)\Big] \label{eq:2.23}
\end{equation}
Note  by the Glivenko -Canteli Lemma, $\Delta\Big[\Lambda \left(w\right) - \hat{\Lambda}_n \left(w\right)\Big] \to 0$, and $ \Delta\Big[\hat{\mathcal{K}} \left(w\right)-  \mathcal{K} _n\left(w\right)\Big] \to 0$ almost surely,  for  large  $n.$  Therefore,  it    suffices  to  show by, Lemma~\ref{L2}, that for  every  $\epsilon>0$,  there  exists  $N(\epsilon)$  such  that  $n>N(\epsilon),$  implies  

\begin{equation}
	\Delta\Big[\hat{\Lambda}_n \left(w\right) - \hat{\mathcal{K}}_n \left(w\right)\Big]\le \epsilon. \label{eq:2.26}
\end{equation}

Let  $\epsilon>0$   and   choose $N(\epsilon)=[A(m+1/2,m_2,\frak{\nu})/\epsilon]$, then,  for  all  $n\geq N(\epsilon)$   we  have  
\begin{equation}
	\Delta\Big[\hat{\Lambda}_n \left(w\right) - \hat{\mathcal{K}}_n \left(w\right)\Big] \leq \dfrac{A( m_2+1/2, m_2,  \mathfrak{\nu})}{n}\leq \dfrac{A( m_2+1/2, m_2,  \mathfrak{\nu})}{N(\epsilon)}\le  \epsilon,   \label{eq:2.27}
\end{equation}
which  ends  the  proof  of  the Theorem.

{ \bf Proof  of  Theorem~\ref{main1}(ii):} We  observe that  $\mathcal{K}$  is  integral  of  functions  $\phi(w)$   and  therefore  by  the  fundamental  theorem  of  calculus  it  is  absolutely continuous.  Now,  using  \eqref{eq:2.23}  and  \eqref{eq:2.27}  of the  proof  of   Theorem~\ref{main1}(i)  above  we  have  the  desired  results.

\subsection{Proof  of  Theorem ~\ref{cor1} by  \cite[Theorem~7]{Springer1970} }

We  recall  from  Section~\ref{sec1} that $t_i$  is  of  the  form  $\displaystyle t_i = \dfrac{Y_{1(i)}}{Y_{1(i)} +
		Y_2}=W,$  where \(Y_{1(i)} \sim F{(n_1 -\rho+ i,\mathit{\nu})}\)\, and \, \(Y_2 \sim F{(\rho,\mathit{\nu})}\).   Using, Theorem~\ref{main1} we  have  that  each  $t_i$  is  approximately  distributed  as  beta  $B_{t_i}\left(\dfrac{1}{2}(\rho+0.5), \dfrac{1}{2}\rho\right).$  Note  that  the  approximate  distribution  of  $t_i$  is  independent  of  $n_1-\rho+i$  and  therefore p.d.f. of   $\Omega=\prod_{i=1}^{n_2}t_i$,  $\sigma(w)$  is  the Meijer  G-function multiplied  by  a normalizing  constant   $ \Big[
		\frac{\Gamma\left(\frac{1}{2}(2\rho+0.5)\right)}{\Gamma\left(\frac{1}{2}(\rho+0.5)\right)}\Big]^{n_2}.$  Thus,  $g(w)$  is  given  by  $$	\begin{aligned}
			\sigma (\omega)&= \Big[
			\frac{\Gamma\left(\frac{1}{2}(2\rho+0.5)\right)}{\Gamma\left(\frac{1}{2}(\rho+0.5)\right)}\Big]^{n_2}\\
			&\qquad\qquad\qquad\times G^{n_2,0}_{n_2,0}
			\left(\omega \left| \begin{array}{llll}
				\frac{1}{2}(2\rho - 1.5 )
				& \frac{1}{2}(2\rho -1.5 ), & \ldots, & \frac{1}{2}(2\rho -1.5) \\
				\frac{1}{2}(\rho-1.5)& \frac{1}{2}(\rho-1.5), & \ldots, & \frac{1}{2}(\rho-1.5)
			\end{array}  \right. \right),
		\end{aligned}$$
		which  ends  the  proof  of  Theorem~\ref{cor1}(i).	Now  integrating  	$	\sigma (t)$  with  respect  to  $t$,  for  $0<t<w$  and  setting  $n_2=\ell-\rho$  we  have 
	 $$
	 \begin{aligned}
	 &\Sigma(w)=\Big[
	\frac{\Gamma\left(\frac{1}{2}(2\rho+0.5)\right)}{\Gamma\left(\frac{1}{2}(\rho+0.5)\right)}\Big]^{\ell-\rho}\times\\
	&G^{\ell-\rho+1,1}_{\ell-p,1} \left(\omega \left| \begin{array}{llllll}
		1&\frac{1}{2}(2\rho + 0.5 )&\frac{1}{2}(\rho+0.5) , & \ldots, & \frac{1}{2}(2\rho + 0.5)&\frac{1}{2}(2\rho + 0.5) \\
		\frac{1}{2}(\rho+0.5)&	\frac{1}{2}(\rho+0.5)& \frac{1}{2}(\rho+0.5), & \ldots, & \frac{1}{2}(\rho+0.5)& 0
	\end{array}\right.\right)
	\end{aligned}$$
	, which  concludes  the  proof  of  Theorem~\ref{cor1}(ii).

	\section{SIMULATION RESULTS}\label{sec4}
In this section we consider simulation of the required combination of the Fisher-Snedecor distributions to confirm the theories, corollaries and propositions discussed in the previous chapter. Thus for $W = Y_1/\Big(Y_1 + Y_2\Big)$, where $Y_1 \sim F_{{m_1}, \mathfrak{\nu}}$ and $Y_2 \sim F_{{m_2}, \mathfrak{\nu}}$, the variable, $W$ is simulated for some number of times ($n$) and the empirical cumulative distribution functions (\textit{CDF}) estimated for various combinations of $m_1$, $m_2$ and $\mathfrak{\nu}$. 

The distribution $B_w\left(\dfrac{m_2+ 0.5}{2}, \dfrac{m_2}{2}\right)$ is compared thoroughly with actual empirical distribution of the targeted variable $W$.  The comparison are done using graphical approaches (comparing the distribution functions and density functions of the two distributions) and verifications through the Kolmogorov-Smirnov and the Anderson Darling tests of distribution comparison. The maximum distances will also be observed to verify the applicability of the Glivenko-Canteli Lemma in this situation.

\subsection{Comparing the cumulative distribution functions of the Proposed distribution to Proportion of F-variate }

The cumulative distribution of the proposed distribution , 	 $B_w\left(\dfrac{m_2+ 0.5}{2}, \dfrac{m_2}{2}\right)$ and the empirical distribution of the proportion of two F-variates, $W$, are compared on various given values of $m_1$ and $m_2$ and $\mathfrak{\nu}$ and the results in Figures \ref{fig:fig3.1} 

\newpage
\begin{figure}[!htbp]
	\begin{tabular}{p{3.5in}p{3.5in}}
	
		\begin{subfigure}[b]{.35\textwidth}
			\centering
			\hstretch{1.4}{\includegraphics[width=\textwidth]{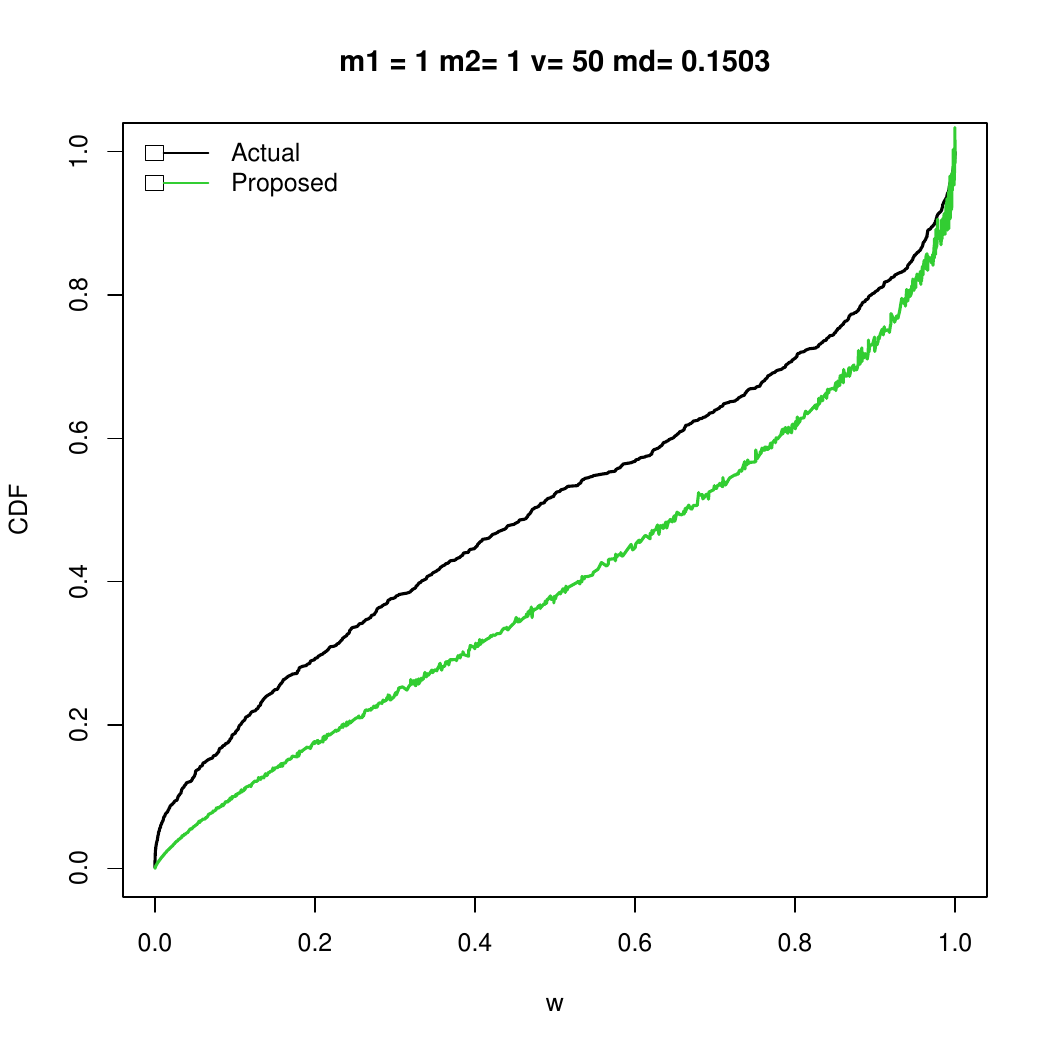}}
			\caption*{$m_1  = 1$, $m_2  = 1$}
		\end{subfigure}&
		\begin{subfigure}[b]{.35\textwidth}
			\centering
			\hstretch{1.4}{\includegraphics[width=\textwidth]{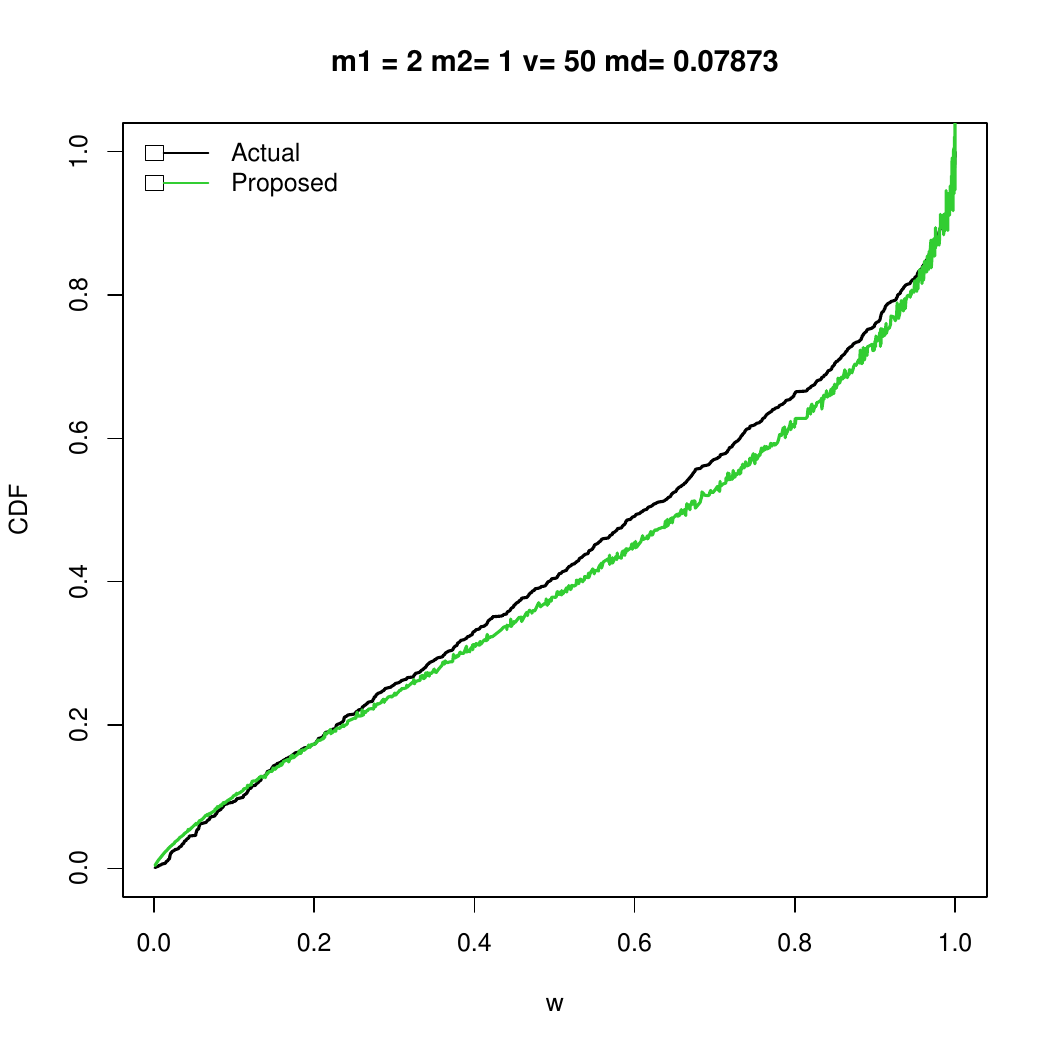}}
			\caption*{$m_1  = 2$, $m_2  = 1$}
		\end{subfigure}\\
			\begin{subfigure}[b]{.35\textwidth}
			\centering
			\hstretch{1.4}{\includegraphics[width=\textwidth]{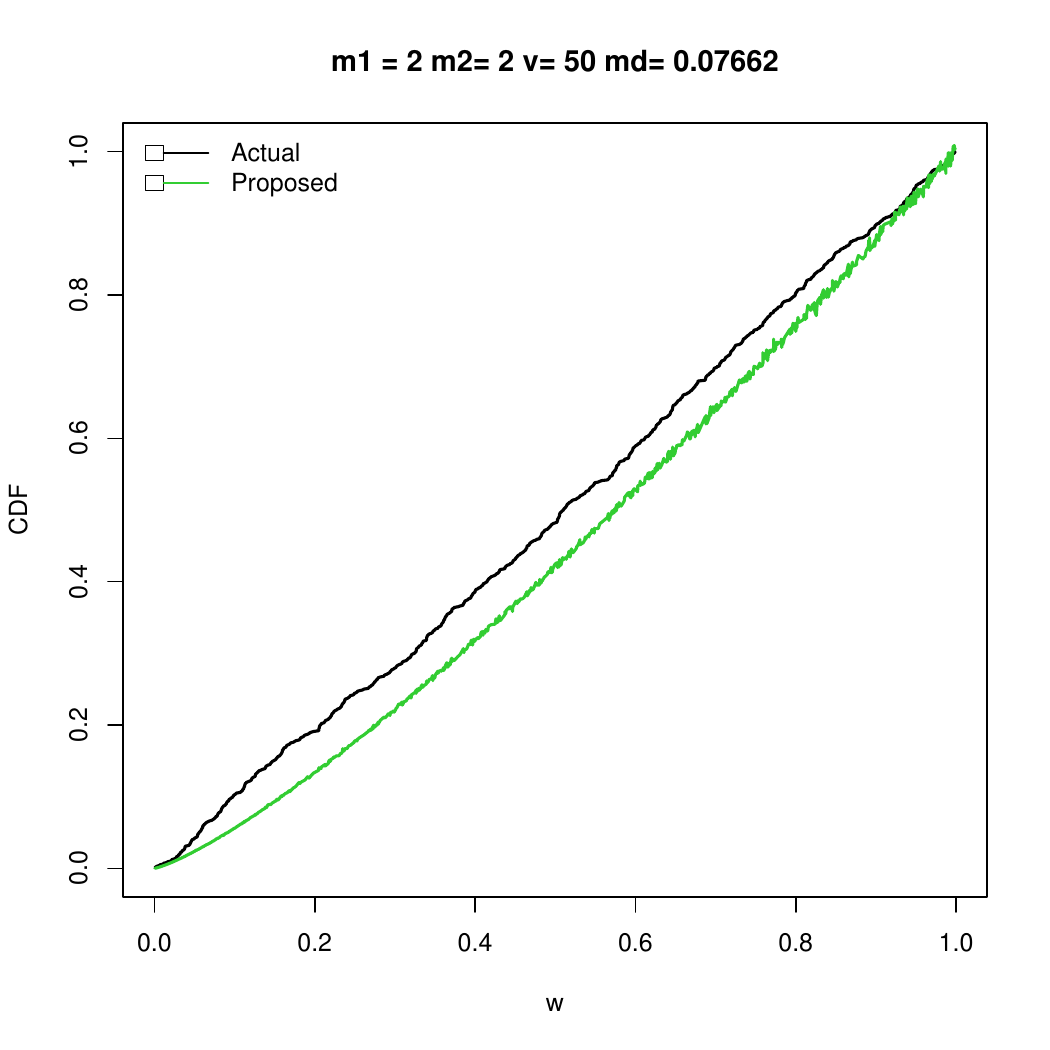}}
			\caption*{$m_1  = 2$, $m_2 = 1$}
		\end{subfigure}&
		\begin{subfigure}[b]{.35\textwidth}
			\centering
			\hstretch{1.4}{\includegraphics[width=\textwidth]{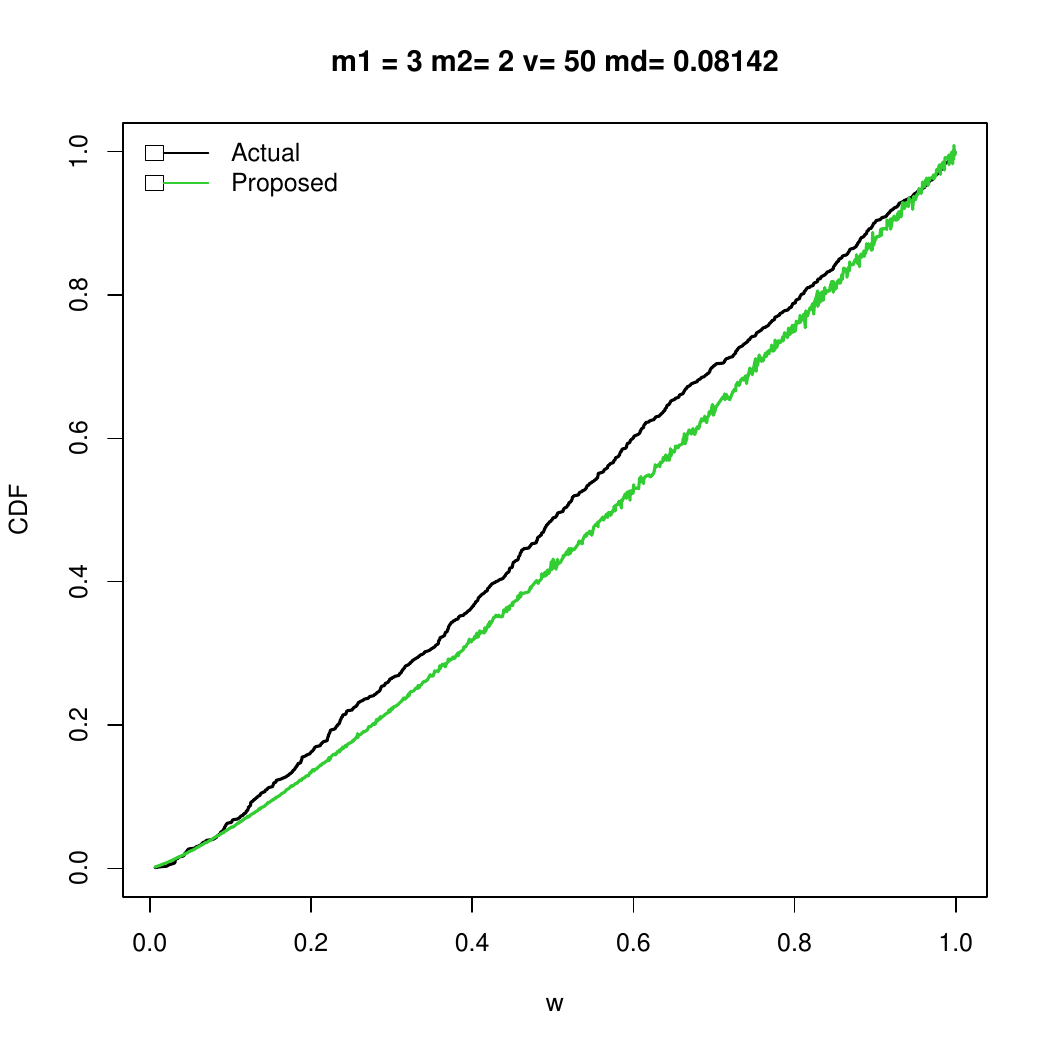}}
			\caption*{$m_1  = 3$, $m_2  = 2$}
		\end{subfigure}\\
	\begin{subfigure}[b]{.35\textwidth}
	\centering
	\hstretch{1.4}{\includegraphics[width=\textwidth]{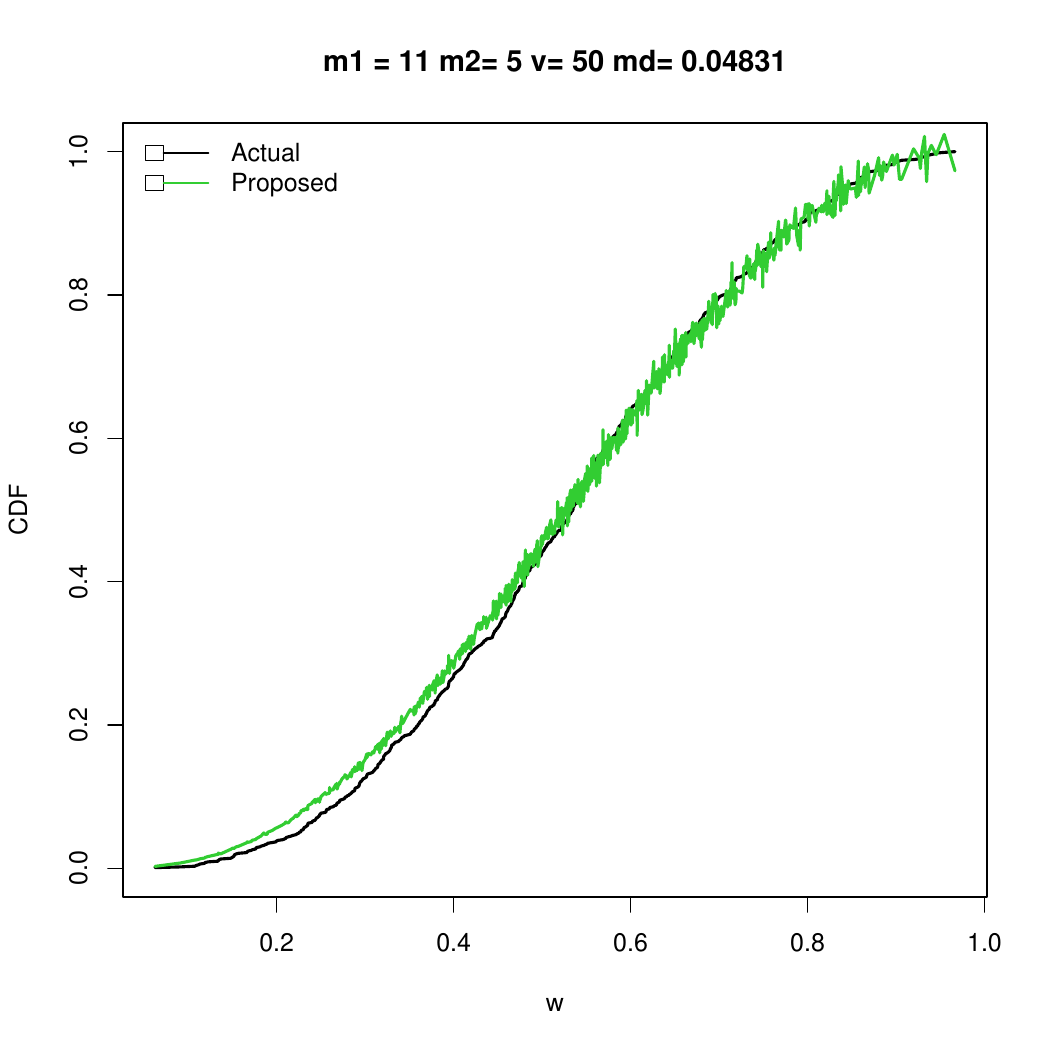}}
	\caption*{$m_1  = 11$ , $m_2  = 5$}
\end{subfigure}&
\begin{subfigure}[b]{.35\textwidth}
	\centering
	\hstretch{1.4}{\includegraphics[width=\textwidth]{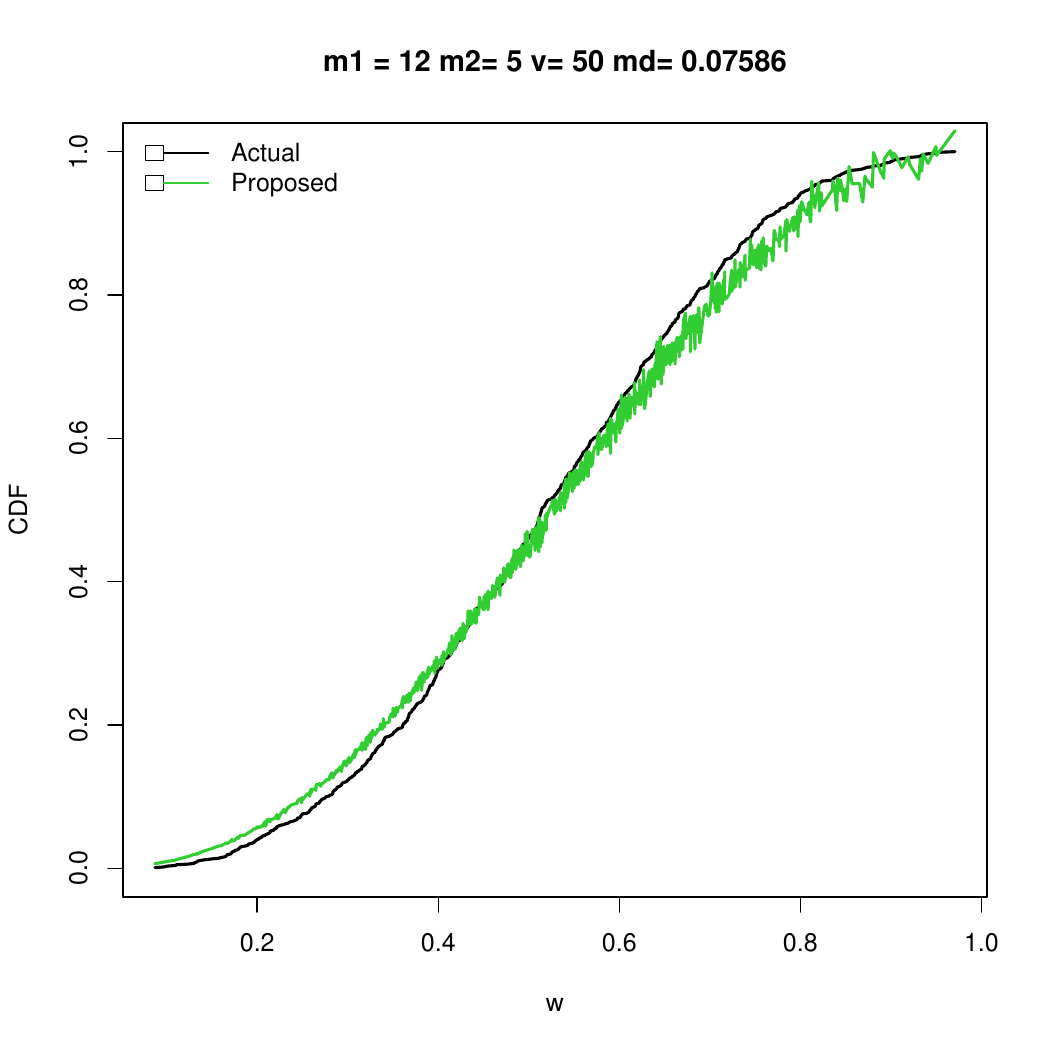}}
	\caption*{$m_1  = 12$, $m_2  = 5$}
\end{subfigure}\\
		\begin{subfigure}[b]{.35\textwidth}
	\centering
	\hstretch{1.4}{\includegraphics[width=\textwidth]{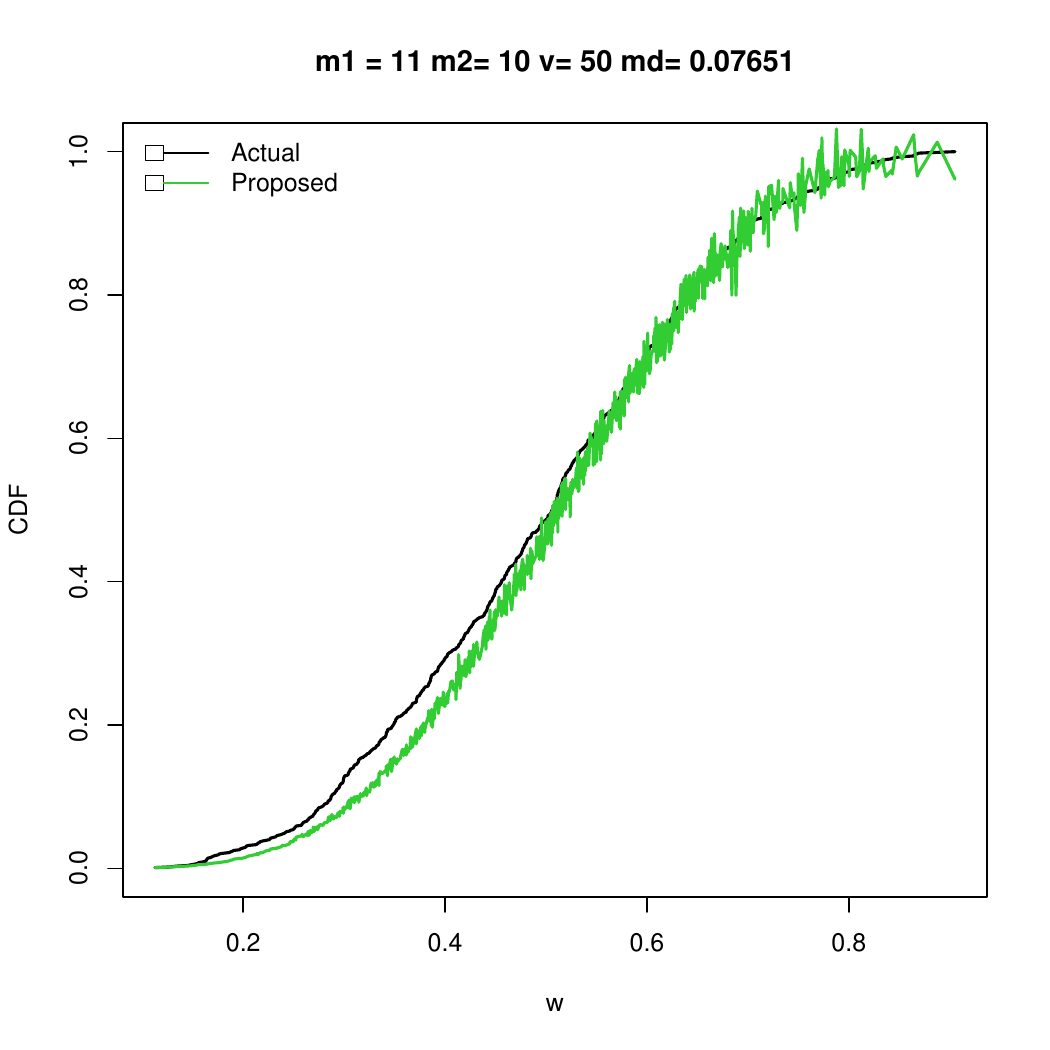}}
	\caption*{$m_1  = 11$, $m_2  = 10$}
\end{subfigure}&
\begin{subfigure}[b]{.35\textwidth}
	\centering
	\hstretch{1.4}{\includegraphics[width=\textwidth]{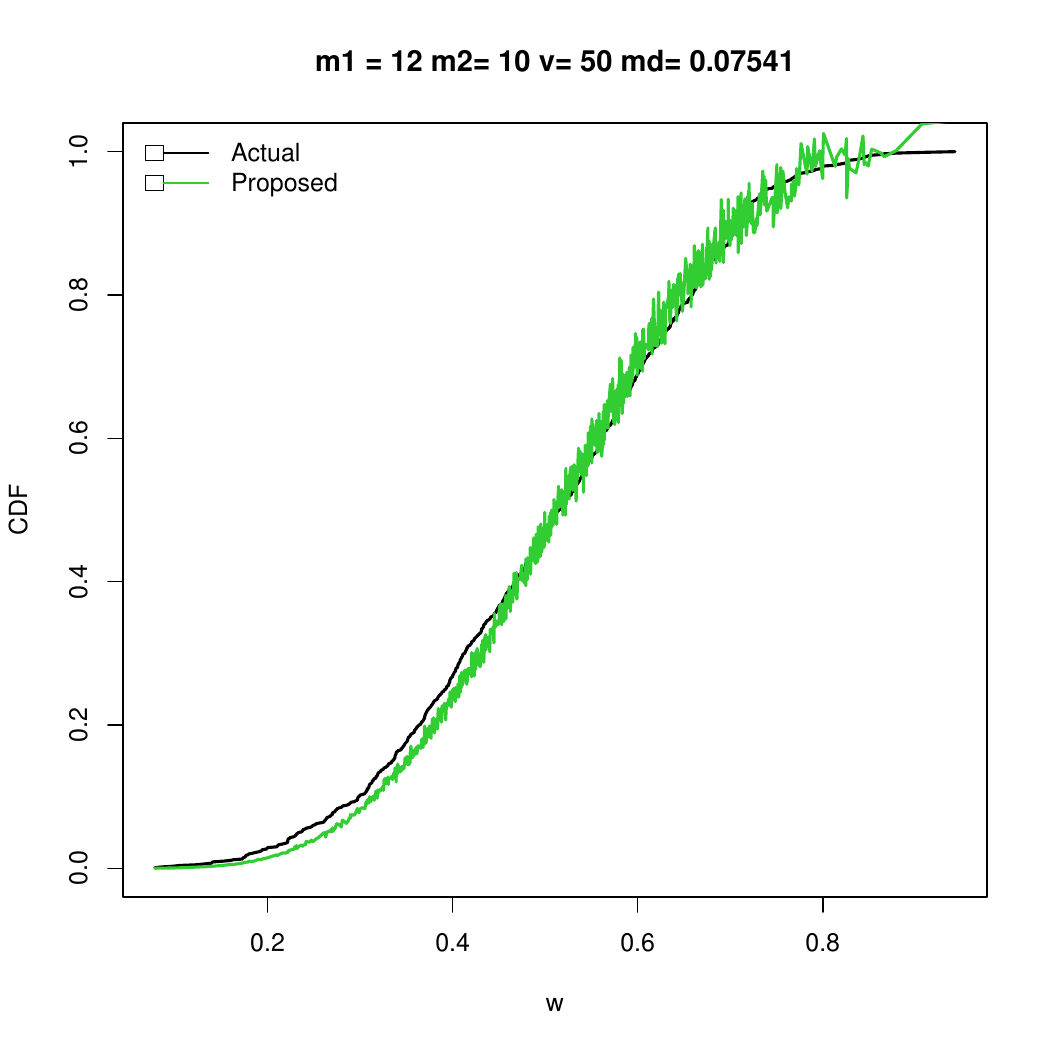}}
	\caption*{$m_1  = 12$, $m_2  = 10$}
\end{subfigure}\\
	\end{tabular}
	\caption{CDF Plots of $W$ and Proposed Distribution for $m_2=1,2 5, 10$, $\mathfrak{\nu} = 50$}
	\centering
	\begin{scriptsize}
		\textit{	md= Maximum Distance Between distribution functions}
	\end{scriptsize}
	\label{fig:fig3.1}
\end{figure}

Figure \ref{fig:fig3.1} directly compare the cumulative function curve of the proposed approximated distribution of $W$ to the empirical distribution curve of $W$ when $m_2 = 1, 2, 5, 10$	 and $\mathfrak{\nu}=50$. The proposed distribution even appears to approximate the actual distribution when $m_1 = m_2$. For all values of $m_1$, the maximum distance is ranged from 0.4 to 0.16. The shapes of the distribution appears to be symmetric too as their distribution curves are almost similar to the forty-five degree line or $\mathcal{S}-$shape. 
The proposed distribution seem to have  approximated the distribution of $W$ better when $m_2$ increases from $1$ to $10$ as the maximum gap reduces as the $m_2$ increased..

\subsection{Distributional comparison tests between Proposed distribution and distribution of proportion of F-variates }
The results Kolmogorov-Smirnov 	and Anderson-Darling tests comparing the two distributions at some defined values of $m_1$, $m_2$ and $\mathfrak{\nu}$ are shown in Table \ref{tab:tab4.1}. The tests were carried out in conditions where the sample size, $n = 200$ .

\begin{table}[!htbp]\small
	\centering
	\caption{Kolmogorov-Smirnov and Anderson-Darling Tests of distributions ($n=200$)}
	\begin{tabular}{rrrrr}
		\Xhline{2pt}
		\multicolumn{1}{l}{m1} & \multicolumn{1}{l}{m2} & \multicolumn{1}{l}{v} & \multicolumn{1}{l}{K-S Test Statistic} & \multicolumn{1}{l}{A-D Test statistic} \\
		\Xhline{2pt}
		3     & 2     & 50    & \textit{\textbf{0.070}} & \textit{\textbf{-0.4497}} \\
		3     & 2     & 150   & \textit{\textbf{0.120}} & \textit{\textbf{2.6462}} \\
		7     & 2     & 50    & \textit{\textbf{0.080}} & \textit{\textbf{-0.0472}} \\
		7     & 2     & 150   & \textit{\textbf{0.100}} & \textit{\textbf{0.0334}} \\
		17    & 2     & 50    & \textit{\textbf{0.105}} & \textit{\textbf{0.7747}} \\
		17    & 2     & 150   & \textit{\textbf{0.100}} & \textit{\textbf{1.1361}} \\
		6     & 5     & 50    & \textit{\textbf{0.165}} & 5.2803 \\
		6     & 5     & 150   & 0.180 & \textit{\textbf{4.0643}} \\
		10    & 5     & 50    & \textit{\textbf{0.110}} & \textit{\textbf{0.7065}} \\
		10    & 5     & 150   & \textit{\textbf{0.115}} & \textit{\textbf{2.1679}} \\
		20    & 5     & 50    & \textit{\textbf{0.095}} & \textit{\textbf{0.7067}} \\
		20    & 5     & 150   & \textit{\textbf{0.085}} & \textit{\textbf{0.4033}} \\
		11    & 10    & 50    & \textit{\textbf{0.085}} & \textit{\textbf{-0.3534}} \\
		11    & 10    & 150   & \textit{\textbf{0.100}} & \textit{\textbf{2.3397}} \\
		15    & 10    & 50    & \textit{\textbf{0.100}} & \textit{\textbf{-0.2185}} \\
		15    & 10    & 150   & \textit{\textbf{0.100}} & \textit{\textbf{0.0764}} \\
		25    & 10    & 50    & \textit{\textbf{0.100}} & \textit{\textbf{0.4349}} \\
		25    & 10    & 150   & \textit{\textbf{0.080}} & \textit{\textbf{-0.6399}} \\
		16    & 15    & 50    & \textit{\textbf{0.130}} & \textit{\textbf{1.8812}} \\
		16    & 15    & 150   & \textit{\textbf{0.110}} & \textit{\textbf{-0.2864}} \\
		20    & 15    & 50    & \textit{\textbf{0.085}} & \textit{\textbf{-0.3123}} \\
		20    & 15    & 150   & \textit{\textbf{0.140}} & \textit{\textbf{2.8565}} \\
		30    & 15    & 50    & \textit{\textbf{0.100}} & \textit{\textbf{-0.2265}} \\
		30    & 15    & 150   & \textit{\textbf{0.075}} & \textit{\textbf{-0.1306}} \\
		26    & 25    & 50    & \textit{\textbf{0.065}} & \textit{\textbf{-0.1677}} \\
		26    & 25    & 150   & \textit{\textbf{0.060}} & \textit{\textbf{-0.8979}} \\
		30    & 25    & 50    & \textit{\textbf{0.120}} & \textit{\textbf{0.4653}} \\
		30    & 25    & 150   & \textit{\textbf{0.120}} & \textit{\textbf{1.5277}} \\
		40    & 25    & 50    & \textit{\textbf{0.090}} & \textit{\textbf{-0.4905}} \\
		40    & 25    & 150   & \textit{\textbf{0.065}} & \textit{\textbf{-0.5917}} \\
		\Xhline{2pt}
	\end{tabular}
	
	\centering
	\footnotesize{\textit{Highlighted test statistics means significant at \(\alpha = 1\%\)}}
	\label{tab:tab4.1}%
\end{table}%

From Table \ref{tab:tab4.1} Kolmogorov-Smirnov test shows  that, the two distributions are identical statistically more than 99\% of the various combinations $m_1, m_2$ and $\mathfrak{\nu}$ at 1\% level of significance . Only one combination of the parameters,  $m_1  =6, m_2 =5$ and $\mathfrak{\nu}=150$. Noting that the K-S test statistic is the supremum of the distance between the two distribution, it can be said that even in the situation where statistical test do not accept the identical distribution the distributions do deviate from each other significantly

The Anderson Darling tests also reports similar results by confirming the identicalness of two distribution in more 99\% of the cases with the only exception being where  $m_1  =6, m_2 =5$ and $\mathfrak{\nu}=50$.

\section{ DISCUSSION  AND CONCLUSION}\label{sec5}
\subsection{Discussion of Results}
Given that $W=\dfrac{Y_1}{Y_1+Y_2}$, $Y_1 \sim F_{{m_1}, \mathfrak{\nu}}$ and $Y_2 \sim F_{{m_2}, \mathfrak{\nu}}$ and $\mathfrak{\nu} > m_1$, the proposed approximate distribution for $W$ is Beta distribution with shape parameters, $\dfrac{m_2+0.5}{2}$ and $\dfrac{m_2}{2}$, denoted by $\Bigg[B_w\left(\dfrac{m_2+0.5}{2}, \dfrac{m_2}{2}\right)\Bigg]$.

The proposed Beta distribution for $W$, $\Bigg[B_w\left(\dfrac{m_2+0.5}{2}, \dfrac{m_2}{2}\right)\Bigg]$,  is found to exhibit same shape in terms of the distribution curves as the empirical distribution of curve of $W$ in the possible combinations of $m_1, m_2$ and $\mathfrak{\nu} =50$ considered. The maximum distance between the distribution function curves is commendably low (mostly less than 0.2) and even works respectably well  for $m_1 \leq m_2$ especially when $m_2 = 1,2,3$.

The distribution, $B_w\left(\dfrac{m_2+0.5}{2}, \dfrac{m_2}{2}\right)$, though is proposed to be just an approximate distribution of $W$, was subjected to distributional tests to compare how statistically significant the identicalness of the proposed distribution to the distribution of $W$. The Kolmogorov-Smirnov and the Anderson-Darling tests were conducted and on different combinations of the parameters. The Kolmogorov-Smirnov tests affirmed the identicalness in more than 90\% of the cases considered and the Anderson Darling more than 90\% of the cases. This was not surprising as the Kolmogorov-Smirnov test is known to be best for distribution with least outliers and the actual distribution of $W$ seem to be platykurtic. Though, the Anderson-Darling tests is best suited for distributions with outliers, it still was able to affirm the identicalness in a substantial number of cases.

The  density curves proposed distribution and the empirical kernel density function curves  of $W$ can also be used for comparison. The trend of the maximum distance between the distribution function of the proposed distribution and the empirical distribution of $W$ can also be used to validate the proposed distribution (loosely based on the Glivenko-Canteli theorem). Interested readers can see these two validation procedures in Thesis, ``Distribution Of Some Diagnostic Statistics in Quantile Regression with t-Distributed Covariate Measurement Errors: A Heuristic Approach" submitted to University of Ghana by Issah Seidu (2023). 
\subsection{Conclusion}
Just as the common scale parameters in proportion of two Gamma variates is completely independent of the shape parameters of resulting Beta distribution, the proportion of two F-variates results in a distribution results in Beta distribution with shape parameters that seem to be functions of only  the numerator degrees of freedom of the second variate. That is, for $W=\dfrac{Y_1}{Y_1+Y_2}$, it is known that  $W\sim B_w \left(m_1, m_2\right)$ if $Y_1 \sim  \Gamma\left(m_1,\mathfrak{\nu}\right)$ and $Y_2\sim  \Gamma\left(m_2,\mathfrak{\nu}\right)$, whereas from this study  $W\simeq B_w\left(\dfrac{m_2+0.5}{2}, \dfrac{m_2}{2}\right)$ if $Y_1 \sim F_{{m_1}, \mathfrak{\nu}}$ and $Y_2 \sim F_{{m_2}, \mathfrak{\nu}}$ provided $m_2\le m_1< \mathfrak{\nu}.$

The Heuristic approach is a very recommendable approach for other functions of variables that seem mathematically not tractable and should be employed more in approximating the distribution of other combinations such as products, ratios etc.

{\bf Declaration}

 The authors report there are no competing interests to declare.

 {\bf Acknowledgement}
 
The  first  author  thanks  the Carnegie  Banga-Africa  Project,  University  of  Ghana  for  supporting  this  Ph.D  research work.

\bibliographystyle{apacite}	
\bibliography{BayesianQR.bib}

\end{document}